\title[Coalgebras in the Dwyer-Kan Localization of a Model Category]{Coalgebras in the Dwyer-Kan Localization of a Model Category}
\author[Maximilien P\'eroux]{Maximilien P\'eroux}
\theoremstyle{definition}
\newtheorem{defi}{Definition}[section]
\newtheorem{rem}[defi]{Remark}
\numberwithin{equation}{section}
\long\def\emptytext#1{}
\theoremstyle{plain}
\newtheorem{thm}[defi]{Theorem}
\newtheorem{prop}[defi]{Proposition}
\newtheorem{cor}[defi]{Corollary}
\renewcommand{\o}{\otimes}
\newcommand{\bI}{\mathbb I}
\newcommand{\I}{\mathbb I}
\newcommand{\ai}{{\mathbb{A}_\infty}}
\newcommand{\ei}{{\mathbb{E}_\infty}}
\newcommand{\id}{\mathsf{id}}
\newcommand{\bS}{\mathbb{S}}
\newcommand{\bJ}{\mathbb{J}}
\newcommand{\Smash}{\wedge}
\newcommand{\sm}{\wedge}
\renewcommand{\r}{\rightarrow}
\newcommand{\si}{\Sigma^\infty}
\newcommand{\C}{\mathsf{C}}
\newcommand{\D}{\mathsf{D}}
\newcommand{\op}{^\mathsf{op}}
\newcommand{\M}{\mathsf{M}}
\newcommand{\N}{\mathscr{N}}
\newcommand{\Hom}{\mathsf{Hom}}
\newcommand{\ch}{\mathsf{Ch}}
\newcommand{\cofree}{\mathsf{T^\vee}}
\newcommand{\coalg}{\mathsf{CoAlg}}
\newcommand{\ccoalg}{\mathsf{CoCAlg}}
\newcommand{\comon}{\mathsf{CoAlg}}
\newcommand{\ccomon}{\mathsf{CoCAlg}}
\newcommand{\Mod}{\mathsf{Mod}}
\newcommand{\SpS}{\mathsf{Sp}^{{\Sigma}}}
\newcommand{\sset}{\mathsf{sSet}}
\newcommand{\Fin}{\mathsf{Fin}_*}
\newcommand{\W}{\mathsf{W}}
\newcommand{\Cinf}{\mathcal{C}}
\newcommand{\Oinf}{\mathcal{O}}
\newcommand{\coalginf}{\mathcal{CoAlg}}
\tikzset{
    labl/.style={anchor=south, rotate=-32, inner sep=.9mm}
}
\tikzset{
    labll/.style={anchor=south, rotate=32, inner sep=.9mm}
}
\begin{document}

\address{Department of Mathematics, University of Pennsylvania,
209 South 33rd Street,
Philadelphia, PA, 19104-6395, USA}
    \email{mperoux@sas.upenn.edu}

\subjclass [2010] {16T15, 18D10, 18N40, 18N70, 55P42, 55P43} 
   
\keywords{homotopy, spectrum, coalgebra, $\infty$-category, rigidification, Dold-Kan Correspondence}

\begin{abstract}  
We show that weak monoidal Quillen equivalences induce equivalences of symmetric monoidal $\infty$-categories with respect to the Dwyer-Kan localization of the symmetric monoidal model categories. 
The result will induce a Dold-Kan correspondence of coalgebras in $\infty$-categories. Moreover it shows that Shipley's zig-zag of Quillen equivalences provides an explicit symmetric monoidal equivalence of $\infty$-categories for the stable Dold-Kan correspondence.
We study homotopy coherent coalgebras associated to a monoidal model category and we show examples when these coalgebras cannot be rigidified. That is, their $\infty$-categories are not equivalent to the Dwyer-Kan localizations of strict coalgebras in the usual monoidal model categories of spectra and of connective discrete $R$-modules.
\end{abstract}

\maketitle

\section{Introduction}

Let $\M$ be a model category and $\W$ its morphism class of weak equivalences. 
Recall that the homotopy category $\mathsf{Ho}(\M)$, associated to $\M$, is an ordinary category obtained by inverting all weak equivalences, and can also be denoted $\M[ \W^{-1}]$, see \cite[1.2.1, 1.2.10]{hovey}. However, the higher homotopy information is lost in $\mathsf{Ho}(\M)$. Dwyer and Kan, in \cite{dwyer-kan}, suggested instead a simplicial category $\mathsf{L}^H(\M, \W)$ sometimes called \emph{the hammock localization} of $\M$, that retains the higher information.
The idea has been translated into $\infty$-categories by Lurie in \cite[1.3.4.1, 1.3.4.15]{lurie1}. Following \cite{hin}, we shall prefer the term of\emph{ Dwyer-Kan localization} instead of \emph{underlying $\infty$-category} of a model category (see motivation by Remark \ref{rem: why call it DK loc?} below). If the model category is endowed with a symmetric monoidal structure compatible with the model structure, then the Dwyer-Kan localization is symmetric monoidal with respect to the derived tensor product.

The main result of this paper is Theorem \ref{thm: Weak monoidal eq imply strong in infinity} which shows that weak monoidal Quillen equivalences (as in \cite{monmodSS}) lift to equivalences of symmetric monoidal $\infty$-categories with respect to the Dwyer-Kan localizations. 

Any $\ai$-ring spectrum is homotopic to a strictly unital and associative ring spectrum, in some monoidal model category representing spectra, say symmetric spectra.
Similarly, any $\ei$-ring spectrum is homotopic to a strictly unital, associative and commutative ring spectrum. See \cite{SS}, \cite{MMSS}, \cite{MM} and \cite{EKMM}.
Associative and commutative algebras in the Dwyer-Kan localization of a symmetric monoidal model category $\M$ are precisely the $\ai$-algebras and $\ei$-algebras of $\M$, see  \cite[4.1.8.4, 4.5.4.7]{lurie1}.

In this paper, we answer the following question: can $\ai$-coalgebras and $\ei$-coalgebras  in spectra be homotopic to strictly counital, coassociative and cocommutative coalgebras over the sphere spectrum? In other words, can we \emph{rigidify} the comultiplication in spectra? 
We show in Corollary \ref{cor: non rigi  for coassociative} that $\ai$-coalgebras cannot be rigidified to strictly coassociative and counital coalgebras in spectra, following a previous result in \cite{perouxshipley}. 

A consequence of Theorem \ref{thm: Weak monoidal eq imply strong in infinity} is that it provides a Dold-Kan correspondence for $\ai$ and $\ei$-coalgebras in the Dwyer-Kan localization of  simplicial modules and non-negatively graded chain complexes. At the level of model categories, it was shown in \cite{sore3} that the Dold-Kan correspondence does not lift to a Quillen equivalence between simplicial coalgebras and chain coalgebras.
Therefore, we show in Corollary \ref{cor: failure of rigi} that $\ai$-coalgebras in connective modules over an Eilenberg-Mac Lane spectrum of a commutative ring do not correspond to their strict analogue in either simplicial modules or non-negatively graded chain complexes. Moreover, we show in Corollary \ref{cor: stable dold-kan eq} that the zig-zag of Quillen equivalences from \cite{hzalgshipley} provides Lurie's equivalence in the stable Dold-Kan correspondence {\cite[7.1.2.13]{lurie1}}.

In \cite{connectivecomod}, we are interested in comodules in the Dwyer-Kan localization of non-negatively graded chain complexes over a finite product of fields and show that homotopy coherent comodules can be rigidified to strict comodules.

The paper is constructed as follows. In Section \ref{section: dwyer kan} we recall the various definitions of Dwyer-Kan localizations in the literature in anticipation of our main result Theorem \ref{thm: Weak monoidal eq imply strong in infinity}.
In Section \ref{sec: rigidification setting}, we provide a comparison maps between $\ai$ and $\ei$ coalgebras and their strict analogues. We also provide a simple example where these maps are equivalences in Proposition \ref{prop: works for cartesian}.
In Section \ref{section: DK loc for coalgebras}, we apply Theorem \ref{thm: Weak monoidal eq imply strong in infinity} to the weak monoidal Quillen equivalences of \cite{monmodSS} and \cite{hzalgshipley} and show the failure of rigidification for connective discrete modules in Corollary \ref{cor: failure of rigi}. 
In Section \ref{sec: non rigi for spectra}, we provide a model structure for strict coalgebras in symmetric spectra following \cite{left2} and show in Theorem \ref{thm: perouxshipley result} that its Dwyer-Kan localization is not equivalent to the $\infty$-category of $\ai$-coalgebras in spectra.

\subsection*{Acknowledgment}The results here are part of my PhD thesis \cite{phd}, and as such, I would like to express my gratitude to my advisor Brooke Shipley for her help and guidance throughout the years. I would also like to thank Ben Antieau and Tasos Moulinos for helpful discussions in the early process of writing this paper.

\section{The Dwyer-Kan Localization of a Model Category}
\label{section: dwyer kan}

We show here our main result which is Theorem \ref{thm: Weak monoidal eq imply strong in infinity}. We recall the definition of Dwyer-Kan localization following \cite{hin}, \cite{tch} and \cite{lurie1}.

\subsection{The General Definition}\label{DKloc: general}
We begin with the general definition of the Dwyer-Kan localization of an $\infty$-category.

\begin{defi}[{\cite[1.3.4.1]{lurie1}}]\label{def: dk loc C[W-1] of any inf cat}
Let $\Cinf$ be an $\infty$-category and fix a collection  $\mathcal{W}\subseteq \mathsf{Hom}_{\mathsf{sSet}}(\Delta^1, \Cinf)$ of morphisms in $\Cinf$. The \emph{Dwyer-Kan localization of $\Cinf$ with respect to the collection $\mathcal{W}$} is an $\infty$-category, denoted $\Cinf [\mathcal{W}^{-1}]$, together with a functor $f:\Cinf\rightarrow \Cinf [\mathcal{W}^{-1}]$ that respects the following universal property. 
\begin{enumerate}
\item[(U)] For any other $\infty$-category $\mathcal{D}$, the functor $f$ induces an equivalence of $\infty$-categories:
\[
\begin{tikzcd}
\mathcal{Fun}( \Cinf [\mathcal{W}^{-1}] , \mathcal{D}) \ar{r}{\simeq} & \mathcal{Fun}^\mathcal{W}(\Cinf, \mathcal{D}),
\end{tikzcd}
\]
where $\mathcal{Fun}^\mathcal{W}(\Cinf, \mathcal{D})$ is the full subcategory of functors $\Cinf \rightarrow \mathcal{D}$ that sends morphisms in $\mathcal{W}$ to equivalences in $\mathcal{D}$.
\end{enumerate}
\end{defi}

The Dywer-Kan localization $\Cinf[\mathcal{W}^{-1}]$ always exists, for any choice of $\Cinf$ and $\mathcal{W}$, see \cite[1.3.4.2]{lurie1}, and is unique up to a contractible choice.
We shall be interested in the case when $\Cinf$ equals $\N(\M)$, the nerve of a model category $\M$. 

\begin{defi}[{\cite[1.3.4.15]{lurie1}}]
Let $\M$ be a model category and $\W$ its class of weak equivalences. 
We call $\N(\M)[\W^{-1}]$ the \emph{Dwyer-Kan localization of $\M$ with respect to $\W$} as in Definition \ref{def: dk loc C[W-1] of any inf cat}, where we abuse notation and let $\W$ denote the induced class of morphisms in the nerve $\N(\M)$.
\end{defi}

Notice that the homotopy category of $\N(\M)[{\W}^{-1}]$ is precisely the homotopy category $\mathsf{Ho}(\M)$ associated to $\M$ as in \cite[1.2.1, 1.2.10]{hovey}.

\begin{rem}\label{rem: why call it DK loc?}
We do not define the hammock localization $\mathsf{L}^H(\M, \W)$, but invite the interested reader to look for the explicit definition in \cite[2.1]{dwyer-kan}.
Since simplicial categories represent  $\infty$-categories,  a hammock localized simplicial category $\mathsf{L}^H(\M, \W)$ is a model for the Dwyer-Kan localization $\N(\M)[\W^{-1}]$. 
More precisely, by \cite[2.2.5.1]{htt}, there is a Quillen equivalence between the category of simplicial sets $\mathsf{sSet}$ endowed with the Joyal model structure and the category of simplicial categories $\mathsf{sCat}$ endowed with the Bergner model structure (see \cite{bergner}): 
\[
\begin{tikzcd}[column sep= huge]
\mathsf{sSet} \ar[shift left=2]{r}{\mathfrak{C}}[swap]{\perp} & \mathsf{sCat}.\ar[shift left=2]{l}{\mathfrak{N}}
\end{tikzcd}
\]
The functor $\mathfrak{N}:\mathsf{sCat} \rightarrow \mathsf{sSet}$ is the homotopy coherent nerve, or the simplicial nerve, as in \cite[1.1.5.5]{htt}. 
After a fibrant replacement, the functor $\mathfrak{N}$ sends  $\mathsf{L}^H(\M, \W)$ to the equivalence class of $\N(\M)[\W^{-1}]$,
as seen in \cite[1.3.1]{hin}.
\end{rem}

\begin{rem}\label{rem: DK loc on cof and fib are same}
As noted in \cite[1.3.4.16]{lurie1}, \cite[1.3.4]{hin}, and \cite[8.4]{dwyer-kan}, if the model category $\M$ admits \emph{functorial} fibrant and cofibrant replacement, in the sense of \cite[1.1.1. 1.1.3]{hovey}, then the following $\infty$-categories are equivalent:
\[
\N(\M_c)[\W^{-1}] \simeq \N(\M)[\W^{-1}] \simeq \N(\M_f)[\W^{-1}],
\]
where $\M_c\subseteq \M$ is the full subcategory of cofibrant objects, and $\M_f\subseteq \M$ is the full subcategory of fibrant objects.
\end{rem}

\subsection{Symmetric Monoidal Dwyer-Kan Localization}\label{DKloc: sym DK}

We now construct the symmetric mono\-idal structure on the Dwyer-Kan localization of a symmetric mono\-idal model category $\M$. This is a recollection of Appendix A in \cite{tch} and Section 4.1.7 on monoidal model categories in \cite{lurie1}.

\begin{defi}[{\cite[4.1.7.4]{lurie1}, \cite[A.4, A.5]{tch}}]\label{def: dk sym mon loc}
Let $\Cinf^\o$ be a symmetric mono\-idal $\infty$-category. Let $\mathcal{W}\subseteq \mathsf{Hom}_{\mathsf{sSet}}(\Delta^1, \Cinf)$ be a class of edges in $\Cinf$ that is stable under homotopy, composition and contains all equivalences. 
Suppose further that $\o: \Cinf \times \Cinf \rightarrow \Cinf$ preserves the class $\mathcal{W}$ separately in each variable. 
The \emph{symmetric monoidal Dywer-Kan localization of $\Cinf^\o$ with respect to $\mathcal{W}$} is a symmetric monoidal $\infty$-category, denoted $\Cinf[\mathcal{W}^{-1}]^\o$, together with a symmetric monoidal functor $i:\Cinf^\o\rightarrow\Cinf[\mathcal{W}^{-1}]^\o$ which is characterized by the following universal property.
\begin{enumerate}
\item[(U)]For any other symmetric monoidal $\infty$-category $\mathcal{D}^\o$, the functor $i$ induces an equivalence of $\infty$-categories: 
\[
\mathcal{Fun}_\o(\Cinf[\mathcal{W}^{-1}]^\o, \mathcal{D}^\o) \stackrel{\simeq}\longrightarrow \mathcal{Fun}_\o^\mathcal{W}(\Cinf^\o, \mathcal{D}^\o),
\]
where $\mathcal{Fun}_\o^\mathcal{W}(\Cinf^\o, \mathcal{D}^\o)$ is the full subcategory of symmetric monoidal functors $\Cinf^\o\rightarrow \mathcal{D}^\o$ that sends $\mathcal{W}$ to equivalences.
\end{enumerate}
\end{defi}

As noticed in \cite[A.5]{tch}, the underlying $\infty$-category of the symmetric monoidal category $\Cinf[\mathcal{W}^{-1}]^\o$ is precisely the Dwyer-Kan localization of $\Cinf$ with respect to $\mathcal{W}$ in the sense of Definition \ref{def: dk loc C[W-1] of any inf cat}, i.e.:
\[
\Big(\Cinf[\mathcal{W}^{-1}]^\o\Big)_{\langle 1 \rangle} \simeq \Cinf [\mathcal{W}^{-1}].
\]

\begin{rem}\label{rem: W edges are over id}
Let $\Cinf^\o$ and $\mathcal{W}$ be as in Definition \ref{def: dk sym mon loc}. Given the symmetric monoidal structure $\Cinf^\o\rightarrow \N(\Fin)$, products of $n$ edges in $\mathcal{W}$ in $\Cinf$ correspond precisely, under the equivalence:
\[
\Cinf^{\times n} \simeq \Cinf^\o_{\langle n \rangle},
\]
to morphisms lying over $\id_{\langle n \rangle}$ in $\N(\Fin)$. This defines a class of edges $\mathcal{W}^\o\subseteq \mathsf{Hom}_{\mathsf{sSet}}(\Delta^1, \Cinf^\o)$. Then the Dwyer-Kan localization of $\Cinf^\o$ with respect to $\mathcal{W}^\o$, in the sense of Definition \ref{def: dk loc C[W-1] of any inf cat}, denoted $\Cinf^\o\left[{\left(\mathcal{W}^\o\right)}^{-1}\right]$, is equivalent to $\Cinf[\mathcal{W}^{-1}]^\o$ defined above.
\end{rem}

We would like to study the case where the underlying $\infty$-category of $\Cinf^\o$ is the Dwyer-Kan localization $\N(\M)[\W^{-1}]$ of a model category $\M$.
We first recall the induced symmetric monoidal structure on the nerve of a symmetric monoidal category.

\begin{defi}[{\cite[2.0.0.1]{lurie1}}]\label{def: C otimes of a sym mon cat from lurie}
Let $(\C, \o, \I)$ be a symmetric monoidal category. Define a new category $\C^\o$, called \emph{the operator category of $\C$}, as follows. 
\begin{itemize}
\item Objects are sequences $(C_1, \ldots, C_n)$ where each $C_i$ is an object in $\C$, for all $1\leq i \leq n$, for some $n \geq 1$. We allow the case $n=0$ and thus the empty set $\emptyset$  as a sequence.

\item A morphism $(C_1, \ldots, C_n) \rightarrow (C'_1, \ldots, C'_m)$ in $\C^\o$ is a pair $(\alpha, \{ f_j \})$, where $\alpha$ is a map of finite sets $\alpha: \langle n \rangle \rightarrow \langle m \rangle $ and $\{f_j \}$ is a collection of $m$-morphisms in $\C$:
\[
f_j:\bigotimes_{i\in\alpha^{-1}(j)} C_i \longrightarrow C'_j,
\]
for all $1 \leq j \leq m$. If $\alpha^{-1}(j)=\emptyset$, then $f_j$ is a morphism $\I\rightarrow C_j'$.
\item The composition of morphisms in $\C^\o$ is defined using the compositions in $\Fin$ and $\C$ together with the associativity of the symmetric monoidal structure of $\C$.
\item The identity morphism on an object $(C_1, \ldots, C_n)$ is given by the identities in $\Fin$ and $\C$: $(\id_{\langle n \rangle}, \{\id_{C_j}\})$.
\end{itemize}
\end{defi}

We obtain a functor:
\[
\C^\o \longrightarrow \Fin,
\]
that sends $(C_1, \ldots C_n)$ to $\langle n \rangle$. The induced functor $\N(\C^\o) \rightarrow \N(\Fin)$ in $\infty$-categories is coCartesian and defines a symmetric monoidal structure.

\begin{prop}[{\cite[2.1.2.21]{lurie1}}]\label{prop: N(C) is a sym mon cat}
Given $(\C, \o, \I)$ a symmetric monoidal category, let $\C^\o$ be the operator category of $\C$. Then the nerve $\N(\C^\o)$ is a symmetric monoidal $\infty$-category whose underlying $\infty$-category is $\N(\C)$.
\end{prop}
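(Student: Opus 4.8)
The plan is to unwind Lurie's definition of a symmetric monoidal $\infty$-category and verify it directly for $\N(\C^\o)$. Recall that such a structure is a coCartesian fibration $p\co \Cinf^\o \to \N(\Fin)$ satisfying the Segal condition: for each $n\geq 0$, the inert maps $\rho^i\co \langle n\rangle \to \langle 1\rangle$ (sending $i$ to $1$ and everything else to the basepoint) induce an equivalence $\Cinf^\o_{\langle n\rangle} \xrightarrow{\simeq} \big(\Cinf^\o_{\langle 1\rangle}\big)^{\times n}$. Since $\N(\C^\o) \to \N(\Fin)$ is the nerve of the functor $q\co \C^\o \to \Fin$ sending $(C_1, \ldots, C_n)$ to $\langle n\rangle$, and since the nerve of a Grothendieck opfibration of ordinary categories is a coCartesian fibration of $\infty$-categories, I would first reduce the coCartesian condition to showing that $q$ is a Grothendieck opfibration, and then verify the Segal condition separately.

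For the opfibration claim I would exhibit the coCartesian lifts explicitly. Given an object $(C_1, \ldots, C_n)$ lying over $\langle n\rangle$ and a morphism $\alpha\co \langle n\rangle \to \langle m\rangle$ in $\Fin$, the candidate lift is the morphism
\[
(\alpha, \{\id\})\co (C_1, \ldots, C_n) \longrightarrow \Big(\bigotimes_{i\in\alpha^{-1}(1)} C_i, \ldots, \bigotimes_{i\in\alpha^{-1}(m)} C_i\Big),
\]
whose $j$-th component is the identity of $\bigotimes_{i\in\alpha^{-1}(j)} C_i$ (interpreted as $\I$ when $\alpha^{-1}(j)=\emptyset$). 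I would then check the universal property: any morphism out of $(C_1, \ldots, C_n)$ covering a composite $\beta\alpha$ factors uniquely through this lift. Unwinding the definition of composition in $\C^\o$, this factorization is precisely the statement that $\bigotimes_{i\in(\beta\alpha)^{-1}(k)}C_i$ is recovered from the $\bigotimes_{i\in\alpha^{-1}(j)}C_i$ by further tensoring and reindexing, which holds canonically and coherently by the associativity and symmetry constraints of the symmetric monoidal structure.

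With $q$ a Grothendieck opfibration, $\N(q)$ is a coCartesian fibration, and it remains to identify the fibers. A morphism in $\C^\o$ lying over $\id_{\langle n\rangle}$ is a tuple $(\id, \{f_j\})$ with $f_j\co C_j \to C'_j$, so the fiber category $(\C^\o)_{\langle n\rangle}$ is isomorphic to $\C^{\times n}$. As the nerve preserves products, $\N(\C^\o)_{\langle n\rangle} \cong \N(\C)^{\times n}$, and the coCartesian pushforward along $\rho^i$ sends $(C_1, \ldots, C_n)$ to $C_i$; hence the combined Segal map is literally the projection isomorphism, which is in particular an equivalence. Taking $n=1$ identifies the underlying $\infty$-category $\N(\C^\o)_{\langle 1\rangle}$ with $\N(\C)$, as claimed.

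The main obstacle is the verification of the universal property of the coCartesian lifts in the second step: one must track how iterated tensor products decompose and recombine under composition of maps in $\Fin$, and confirm that the resulting comparison morphisms are the canonical coherence isomorphisms. Mac Lane's coherence theorem guarantees these assemble uniquely, so no genuine choice is involved; the work is entirely in organizing the indexing over the fibers $\alpha^{-1}(j)$ and in handling the empty fibers, where the unit $\I$ intervenes.
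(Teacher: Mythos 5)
Your proposal is correct and is essentially the standard argument: the paper gives no proof of its own (it simply cites Lurie's \cite[2.1.2.21]{lurie1}), and the explicit coCartesian lifts you construct are exactly the ones the paper records in Remark \ref{rem: explicit coCart lift for C^o}. The reduction to a Grothendieck opfibration, the identification of the fibers with $\C^{\times n}$, and the appeal to Mac Lane coherence for the universal property all match the cited proof.
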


\begin{rem}\label{rem: explicit coCart lift for C^o}
Let us provide the coCartesian lifts for $\N(\C^\o)\rightarrow \N(\Fin)$. Given $(C_1, \ldots, C_n)$ in $\C^\o$, and $\alpha: \langle n \rangle  \rightarrow \langle m \rangle$ a map in $\Fin$, the associated coCartesian lift is induced by defining $C'_j$ as follows:
\[
C'_j:= \bigotimes_{i\in \alpha^{-1}(j)} C_i,
\]
for each $1\leq j \leq m$. Define $C_j'=\I$ whenever $\alpha^{-1}(j)=\emptyset$. This determines a morphism $(C_1, \ldots, C_n) \rightarrow (C'_1, \ldots, C'_m)$ in the operator category $\C^\o$ as desired.
\end{rem}

If the symmetric monoidal category $(\C, \o, \I)$ happens to  be endowed with a model structure, the bifunctor $\o: \C\times \C \rightarrow \C$ need not preserve weak equivalences in either variable. We need to restrict to \emph{(symmetric) monoidal model categories} as in \cite[4.2.6]{hovey}.
In any  monoidal model category $(\M, \o, \I)$, the tensor $\o:\M\times \M \rightarrow \M$ preserves weak equivalences in each variable, if we restrict to cofibrant objects $\M_c\subseteq \M$. Moreover. the tensor product of cofibrant objects is again cofibrant. In model categories, this allows us to define a \emph{derived tensor product} for the homotopy category $\mathsf{Ho}(\M)=\M[\W^{-1}]$, see \cite[4.3.2]{hovey}. 
In higher category, the transition between the tensor product and the derived tensor product is exactly through the Dwyer-Kan localization of a symmetric monoidal $\infty$-category as in Definition \ref{def: dk sym mon loc}. 
If we suppose in addition that $\I$ is cofibrant, then, as in Definition \ref{def: C otimes of a sym mon cat from lurie}, we can define $\M_c^\o\subseteq \M^\o$ from the full subcategory of cofibrant objects $\M_c\subseteq \M$, since $(\M_c, \o, \I)$ is symmetric monoidal.

\begin{prop}[{\cite[4.1.7.6]{lurie1}, \cite[A.7]{tch}}]\label{prop: compatibility of DK in sym mon}
Let $(\M, \o, \I)$ be a symmetric monoidal model category. Suppose that $\I$ is cofibrant. 
Then the Dwyer-Kan localization $\N(\M_c)[{\W}^{-1}]$ of $\M$ can be given the structure of symmetric monoidal $\infty$-category via the symmetric monoidal Dwyer-Kan localization of $\N(\M_c^\o)$ in the sense of \textup{Definition \ref{def: dk sym mon loc}},
\[
\begin{tikzcd}
\N(\M_c^\o) \ar{r} & \N(\M_c)[{\W}^{-1}]^\o,
\end{tikzcd}
\] 
where ${\W}$ is the class of weak equivalences restricted to cofibrant objects in $\M$.
\end{prop}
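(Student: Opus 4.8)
The plan is to build the statement directly from the two structural inputs already recorded, Proposition \ref{prop: N(C) is a sym mon cat} and Definition \ref{def: dk sym mon loc}, after verifying that the hypotheses of the latter hold for the pair in question. First I would note that, since $\I$ is cofibrant and the tensor of cofibrant objects is again cofibrant, the full subcategory $\M_c$ is closed under $\o$ and contains the unit, so $(\M_c, \o, \I)$ is a symmetric monoidal category. Applying Proposition \ref{prop: N(C) is a sym mon cat} to $\M_c$ then exhibits $\N(\M_c^\o)$ as a symmetric monoidal $\infty$-category whose underlying $\infty$-category is $\N(\M_c)$; this is the symmetric monoidal $\infty$-category to which I apply Definition \ref{def: dk sym mon loc}, taking for $\mathcal{W}$ the class of weak equivalences between cofibrant objects, viewed as edges of $\N(\M_c)$.

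Next I would check the four conditions that Definition \ref{def: dk sym mon loc} imposes on this data. Three of them are formal: closure under composition is the two-out-of-three axiom; containment of all equivalences holds because the equivalences of $\N(\M_c)$ are precisely the isomorphisms of $\M_c$, which are weak equivalences; and stability under homotopy is automatic, since the mapping spaces of the nerve of an ordinary category are discrete, so homotopic parallel edges are equal.

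The remaining condition, that $\o\co \N(\M_c)\times\N(\M_c)\to\N(\M_c)$ preserve $\mathcal{W}$ separately in each variable, is the one with genuine content, and I expect it to be the main obstacle. It amounts to showing that for a fixed cofibrant object $X$ the functor $X\o-$ sends weak equivalences between cofibrant objects to weak equivalences. I would deduce this from Ken Brown's lemma: the pushout-product axiom of the monoidal model structure forces $X\o-$ to carry trivial cofibrations between cofibrant objects to trivial cofibrations, hence to weak equivalences, and Ken Brown's lemma then upgrades this to preservation of all weak equivalences between cofibrant objects. This is precisely the assertion recorded in the paragraph preceding the statement.

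With all hypotheses verified, Definition \ref{def: dk sym mon loc} yields the symmetric monoidal $\infty$-category $\N(\M_c)[\W^{-1}]^\o$ together with the symmetric monoidal functor $\N(\M_c^\o)\to\N(\M_c)[\W^{-1}]^\o$. Finally, the identification of underlying $\infty$-categories noted immediately after Definition \ref{def: dk sym mon loc} shows that the underlying $\infty$-category of $\N(\M_c)[\W^{-1}]^\o$ is the Dwyer-Kan localization $\N(\M_c)[\W^{-1}]$, which is exactly the structure asserted.
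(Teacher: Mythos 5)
Your argument is correct and matches the paper's treatment: the paper states this proposition without its own proof, citing \cite[4.1.7.6]{lurie1} and \cite[A.7]{tch}, and the hypothesis-checking you carry out (closure of $\M_c$ under $\o$ using cofibrancy of $\I$, and preservation of weak equivalences in each variable via the pushout-product axiom and Ken Brown's lemma) is exactly what the paper records informally in the paragraph preceding the statement before invoking the cited existence result for the symmetric monoidal localization.
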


\begin{rem}\label{rem:Dk SYM loc on cof or not is the same }
We warn the reader of the following technicality.
The inclusion of cofibrant objects $\M_c\subseteq \M$ induces a \emph{lax} symmetric monoidal functor $\N(\M_c^\o)\rightarrow\N(\M^\o)$.
From Remark \ref{rem: DK loc on cof and fib are same}, Proposition \ref{prop: compatibility of DK in sym mon} implies that we can also construct a symmetric monoidal $\infty$-category  $\N(\M)[\W^{-1}]^\o$ whose fiber over $\langle 1 \rangle$ is precisely $\N(\M)[\mathsf{W}^{-1}]$.
However, cofibrant replacement induces a functor $\N(\M^\o)\rightarrow \N(\M)[{\W}^{-1}]^\o$ that is only lax symmetric monoidal and does not share the same properties of universality as in Definition \ref{def: dk sym mon loc}. We invite the interested reader to look at \cite[A.7]{tch} for more details.
\end{rem}

\subsection{Weak Monoidal Quillen Equivalence}\label{DKloc: QE}
Given model categories $\C$ and $\D$, denote $\W_\C$ and $\W_\D$ their respective class of weak equivalences. Suppose we are given a Quillen adjunction:
 \[\begin{tikzcd}
L:\C \ar[shift left=2]{r} & \ar[shift left=2]{l}[swap]{\perp} \D:R.
\end{tikzcd}\] 
Then as the left adjoint functor $L$ preserves weak equivalences between cofibrant objects and the right adjoint functor $R$ preserves weak equivalences between fibrant objects, we get, by \cite[1.5.1]{hin}, a pair of adjoint functors in $\infty$-categories between the Dwyer-Kan localizations of $\C$ and $\D$:
\[
\begin{tikzcd}
\mathbb{L}:\N(\C)[\W^{-1}_\C] \ar[shift left=2]{r} & \ar[shift left=2]{l}[swap]{\perp} \N(\D)[\W^{-1}_\D]:\mathbb{R},
\end{tikzcd}
\] 
where $\mathbb{L}$ and $\mathbb{R}$ represent the derived functors of $L$ and $R$.
If $\C$ and $\D$ are symmetric monoidal model categories, we investigate when the derived functors are symmetric monoidal functors of $\infty$-categories.

\begin{defi}[{\cite[3.6]{monmodSS}}]\label{def: weak mon qui pair}
Let $(\C, \o, \bI)$ and $(\D, \sm, \bJ)$ be symmetric monoidal model categories. 
A \emph{weak monoidal Quillen pair} consists of a Quillen adjunction:
\[\begin{tikzcd}
L:(\C, \o, \bI) \ar[shift left=2]{r} & \ar[shift left=2]{l}[swap]{\perp} (\D, \sm, \bJ):R,
\end{tikzcd}
\]
where $L$ is lax comonoidal such that the following two conditions hold.
\begin{enumerate}[label=(\roman*)]
\item\label{enum: weak monoidal} For all cofibrant objects $X$ and $Y$ in $\C$, the comonoidal map:
\[
\begin{tikzcd}
L(X\o Y) \ar{r} & L(X)\Smash L(Y),
\end{tikzcd}
\]
is a weak equivalence in $\D$.
\item\label{enum; weak monoidal 2} For some (hence any) cofibrant replacement $\lambda:c\bI \stackrel{\sim}\longrightarrow \bI$ in $\C$, the composite map:
\[
\begin{tikzcd}
L(c\mathbb{I}) \ar{r}{L(\lambda)} & L(\I) \ar{r} & \bJ,
\end{tikzcd}
\]
is a weak equivalence in $\D$, where the unlabeled map is the natural comono\-idal structure of $L$.
\end{enumerate}
A weak monoidal Quillen pair is a \emph{weak monoidal Quillen equivalence} if the underlying Quillen pair is a Quillen equivalence.
\end{defi}

\begin{thm}\label{thm: Weak monoidal eq imply strong in infinity}
Let $(\C, \o, \bI)$ and $(\D, \sm, \bJ)$ be symmetric monoidal model categories with cofibrant units. 
Let $\W_\C$ and $\W_\D$ be the classes of weak equivalence in $\C$ and $\D$ respectively.
Let: \[\begin{tikzcd}
L:(\C, \o, \bI) \ar[shift left=2]{r} & \ar[shift left=2]{l}[swap]{\perp} (\D, \sm, \bJ):R,
\end{tikzcd}\] be a weak monoidal Quillen pair.
Then the derived functor of  $L:\C \r \D$ induces a symmetric monoidal functor between the Dwyer-Kan localizations: 
\[
\begin{tikzcd}
\mathbb{L}:\N (\C_c) \left[\W_\C^{-1}\right] \ar{r} & \N(\D_c) \left[\W_\D^{-1}\right],
\end{tikzcd}
\]
where $\C_c\subseteq \C$ and $\D_c\subseteq \D$ are the full subcategories of cofibrant objects. It is the left adjoint of the derived functor of $R:\D\rightarrow \C$.
If $L$ and  $R$ form a weak monoidal Quillen equivalence, then $\mathbb{L}$ is a symmetric monoidal equivalence of $\infty$-categories.
\end{thm}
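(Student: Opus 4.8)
The plan is to leave the purely categorical content of the adjunction to \cite{hin} and to concentrate on producing the symmetric monoidal enhancement of $\mathbb{L}$. First I would recall that, by \cite[1.5.1]{hin}, the Quillen adjunction $L\dashv R$ already induces an adjunction $\mathbb{L}\dashv\mathbb{R}$ between $\N(\C_c)[\W_\C^{-1}]$ and $\N(\D_c)[\W_\D^{-1}]$, whose two functors are the derived functors of $L$ and $R$. This settles the assertion that $\mathbb{L}$ is the left adjoint of the derived functor of $R$, and reduces the theorem to equipping $\mathbb{L}$ with a \emph{strong} symmetric monoidal structure. Throughout I would use Remarks \ref{rem: DK loc on cof and fib are same} and \ref{rem:Dk SYM loc on cof or not is the same } to pass freely, and compatibly with the symmetric monoidal structures of Proposition \ref{prop: compatibility of DK in sym mon}, between the cofibrant and fibrant presentations of each localization; reconciling the fibrant framework natural to $R$ with the cofibrant framework of the operator categories $\C_c^\o, \D_c^\o$ is the first place where care is required, since fibrant objects are not closed under $\o$.

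The second step is to transport the monoidal structure of the right adjoint across the localization. Because $L$ is lax comonoidal, its right adjoint $R$ is lax monoidal, and hence induces a morphism of $\infty$-operads $\N(\D^\o)\to\N(\C^\o)$ over $\N(\Fin)$ in the sense of Definition \ref{def: C otimes of a sym mon cat from lurie}. Restricting to the objects on which $R$ is homotopical and composing with the localization functor, I would argue as in the proof of Proposition \ref{prop: compatibility of DK in sym mon} (using the class $\W^\o$ of Remark \ref{rem: W edges are over id} and the operadic refinement of the universal property in Definition \ref{def: dk sym mon loc}) that this descends to a morphism of $\infty$-operads
\[
\mathbb{R}^\o\co \N(\D_c)[\W_\D^{-1}]^\o \longrightarrow \N(\C_c)[\W_\C^{-1}]^\o,
\]
that is, to a lax symmetric monoidal structure on $\mathbb{R}$ refining its underlying functor.

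The third and decisive step is to promote $\mathbb{L}$ to a strong monoidal functor. By the $\infty$-categorical form of doctrinal adjunction, an adjunction $\mathbb{L}\dashv\mathbb{R}$ of symmetric monoidal $\infty$-categories in which $\mathbb{R}$ is lax monoidal canonically endows $\mathbb{L}$ with an \emph{oplax} symmetric monoidal structure, whose comonoidal maps $\mathbb{L}(X\o Y)\to\mathbb{L}(X)\sm\mathbb{L}(Y)$ and $\mathbb{L}(\bI)\to\bJ$ are the mates of the lax structure maps of $\mathbb{R}$. Unwinding the mate correspondence identifies these with the derived comonoidal maps of $L$. Conditions \ref{enum: weak monoidal} and \ref{enum; weak monoidal 2} of Definition \ref{def: weak mon qui pair} say precisely that these maps are weak equivalences between cofibrant objects, so they become equivalences in $\N(\D_c)[\W_\D^{-1}]$. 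Since an oplax symmetric monoidal functor all of whose comonoidal structure maps are equivalences preserves coCartesian edges, it is strong symmetric monoidal; this gives the sought-after structure on $\mathbb{L}$. I expect this doctrinal-adjunction step, together with the verification that the resulting structure maps are coherently the derived comonoidal maps of $L$, to be the main obstacle, as it is exactly the passage from the strict comonoidal data of $L$ to a fully coherent symmetric monoidal functor of $\infty$-categories.

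Finally, for the equivalence statement, if $L\dashv R$ is a Quillen equivalence then $\mathbb{L}\dashv\mathbb{R}$ is an adjoint equivalence of the underlying $\infty$-categories, again by \cite[1.5.1]{hin}. A strong symmetric monoidal functor whose underlying functor is an equivalence of $\infty$-categories is automatically a symmetric monoidal equivalence, its inverse inheriting a canonical symmetric monoidal structure; hence $\mathbb{L}$ is an equivalence of symmetric monoidal $\infty$-categories, completing the argument.
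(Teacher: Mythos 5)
Your route is genuinely different from the paper's, and it stalls at its central step. The paper never derives the right adjoint at all: it forms the functor $L^\o\colon \N(\C_c^\o)\to\N(\D_c^\o)$ induced by $L$ on the operator categories of cofibrant objects, notes that the composite $i_\D\circ L^\o$ inverts $\W_\C$ because $L$ is left Quillen, and then checks directly that this composite carries $p$-coCartesian lifts (described explicitly in Remark \ref{rem: explicit coCart lift for C^o}) to $q$-coCartesian lifts --- this is exactly where conditions (i) and (ii) of Definition \ref{def: weak mon qui pair} enter, since they say the comonoidal comparison maps $L(\bigotimes_i C_i)\to\bigwedge_i L(C_i)$ and $L(\bI)\to\bJ$ are weak equivalences between cofibrant objects and hence become invertible after localization. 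The universal property (U) of Definition \ref{def: dk sym mon loc}, which is stated for strong symmetric monoidal functors, then produces $\mathbb{L}^\o$ in one stroke. Your first and last steps (the adjunction $\mathbb{L}\dashv\mathbb{R}$ via \cite[1.5.1]{hin}, and the fact that a strong symmetric monoidal functor which is an underlying equivalence is a symmetric monoidal equivalence) agree with the paper and are fine.

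The gap is in your second step. You propose to descend $R$ to a lax symmetric monoidal $\mathbb{R}^\o$ on the localizations by ``restricting to the objects on which $R$ is homotopical'' and arguing as in Proposition \ref{prop: compatibility of DK in sym mon}. This does not go through: $R$ preserves weak equivalences only between fibrant objects, and fibrant objects are not closed under the tensor product, so there is no operator category $\D_f^\o$ to restrict $R^\o$ to; on the other hand $i_\C\circ R^\o$ restricted to $\N(\D_c^\o)$ does not invert $\W_\D$. Moreover, the universal property actually available in Definition \ref{def: dk sym mon loc} only factors \emph{strong} symmetric monoidal functors out of the localization; factoring a lax monoidal functor requires the operadic strengthening of Hinich's localization theorem, precisely the kind of input the paper's argument is built to avoid (compare Remark \ref{rem:Dk SYM loc on cof or not is the same }, where the lax monoidal comparison $\N(\M^\o)\to\N(\M)[\W^{-1}]^\o$ is noted to lack the universal property). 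Your third step then rests on $\infty$-categorical doctrinal adjunction and on the identification of the resulting oplax structure maps of $\mathbb{L}$ with the derived comonoidal maps of $L$; both are true but are substantial theorems that you only assert, and the whole detour is unnecessary. The repair is to drop steps two and three and verify, as the paper does, that $i_\D\circ L^\o$ preserves coCartesian edges --- the weak monoidal hypotheses are tailored to make exactly that check succeed.
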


\begin{proof}
Let $\C_c^\o$ and $\D_c^\o$ be the operator categories (Definition \ref{def: C otimes of a sym mon cat from lurie}). 
Denote the symmetric monoidal Dwyer-Kan localizations (Definition \ref{def: dk sym mon loc}) by:
\[
\begin{tikzcd}
i_\C:\N(\C_c^\o) \ar{r} & \N(\C_c)[{\W}_\C^{-1}]^\o, & i_\D:\N(\D_c^\o) \ar{r} & \N(\D_c)[{\W}_\D^{-1}]^\o,
\end{tikzcd}
\]
and denote their coCartesian fibrations by:
\[
\begin{tikzcd}
p: \N(\C_c)[{\W}_\C^{-1}]^\o \ar{r} & \N(\Fin), & q:  \N(\D_c)[{\W}_\D^{-1}]^\o \ar{r} & \N(\Fin).
\end{tikzcd}
\]
The functor $L:\C\rightarrow \D$, as a left Quillen functor, defines $\N(\C_c)\rightarrow \N(\D_c)$, and hence a functor $L^\o: \N(\C_c^\o) \rightarrow \N(\D_c^\o)$ that is compatible with the coCartesian structures:
\[
\begin{tikzcd}
 \N(\C_c^\o) \ar{r}{L^\o} \ar{dr} & \N(\D_c^\o)\ar{d} \ar{r}{i_\D}&  \N(\D_c)[{\W}_\D^{-1}]^\o\ar{dl}{q}\\
 & \N(\Fin).& 
\end{tikzcd}
\]
We show that the composite:
\[
\begin{tikzcd}
\N(\C_c^\o) \ar{r}{L^\o} & \N(\D_c^\o) \ar{r}{i_\D} & \N(\D_c)[{\W}_\D^{-1}]^\o
\end{tikzcd}
\]
is a symmetric monoidal functor that sends ${\W}_\C$ to equivalences, i.e., belongs to the $\infty$-category: \[\mathcal{Fun}^{{\W}_\C}_\o(\N(\C_c^\o),  \N(\D_c)[{\W}_\D^{-1}]^\o),\] as in Definition \ref{def: dk sym mon loc}. The latter is clear as $L$ is a left Quillen functor.
We are left to show that the composite sends $p$-coCartesian lifts to $q$-coCartesian lifts.
Let $(C_1, \ldots, C_n)$ be an object of $\C_c^\o$, and let $\alpha: \langle n \rangle \rightarrow \langle m \rangle$ be a morphism of finite sets. 
A $p$-lift of $(\alpha, (C_1, \ldots, C_n))$ is given in Remark \ref{rem: explicit coCart lift for C^o} by a certain sequence $(C'_1, \ldots, C'_m)$ in $\C_c^\o$, i.e., the induced map $(C_1, \ldots, C_n)\rightarrow (C'_1, \ldots, C'_m)$ is sent to $\alpha$ via the coCartesian functor $p$.
Since $L$ is weak monoidal functor, from \ref{enum: weak monoidal} of Definition \ref{def: weak mon qui pair}, we get that:
\[
\begin{tikzcd}
\displaystyle\bigwedge_{i\in \alpha^{-1}(j)} L(C_i) & \displaystyle\ar{l}[swap]{\sim} L \left( \bigotimes_{i\in \alpha^{-1}(j)} C_i \right)= L(C'_j),
\end{tikzcd}
\]
is a weak equivalence in $\D$, for all $1\leq j \leq m$. 
In the case $\alpha^{-1}(j)=\emptyset$, we apply \ref{enum; weak monoidal 2} of Definition \ref{def: weak mon qui pair} to obtain a weak equivalence:
\[
\begin{tikzcd}
\mathbb{J} & L(\bI)=L(C_j). \ar{l}[swap]{\sim} 
\end{tikzcd}
\]
Applying the localization $i_D$ and Remark \ref{rem: W edges are over id}, we get that $(L(C'_1), \ldots, L(C'_m))$ defines the desired $q$-coCartesian lift.

By the universal property (U) of the symmetric monoidal Dwyer-Kan localization in Definition \ref{def: dk sym mon loc}, the composite functor $i_\D\circ L^\o$ represents a symmetric monoidal $\infty$-functor:
\[
\begin{tikzcd}
\mathbb{L}^\o:\N(\C_c)[{\W}_\C^{-1}]^\o\ar{r} & \N(\D_c)[{\W}_\D^{-1}]^\o.
\end{tikzcd}
\]
Fiberwise over $\N(\Fin)$, the functor $\mathbb{L}^\o$ is precisely the product of the derived left adjoint functor $\mathbb{L}:\N(\C_c)[\W_\C^{-1}] \rightarrow \N(\D_c)[\W_\D^{-1}]$.
In particular, if $L$ is a Quillen equivalence, then $\mathbb{L}$ is an equivalence of $\infty$-categories, and hence $\mathbb{L}^\o$ is an equivalence of symmetric monoidal $\infty$-categories.
\end{proof}

\begin{rem}\label{rem: weak monoidal eq imply eq of coalgebras}
In \cite[3.12]{monmodSS}, Schwede and Shipley show that given a weak monoidal Quillen equivalence: \[\begin{tikzcd}
L:(\C, \o, \bI) \ar[shift left=2]{r} & \ar[shift left=2]{l}[swap]{\perp} (\D, \sm, \bJ):R,
\end{tikzcd}\] with cofibrant units, then the right adjoint $R$ induces Quillen equivalences between the category of monoids $\mathsf{Mon}(\D)$ and $\mathsf{Mon}(\C)$.
Our Theorem \ref{thm: Weak monoidal eq imply strong in infinity} strengthen the results when we work with $\infty$-categories. 
In particular, given any $\infty$-operad $\Oinf^\o$,
we get an equivalence of $\infty$-categories:
\[\mathcal{Alg}_\Oinf \left( \N(\C_c)[\W_\C^{-1}] \right)\simeq \mathcal{Alg}_\Oinf \left( \N(\D_c)[\W_\D^{-1}] \right),\]
which has been challenging to prove in the case of $\Oinf=\mathbb{E}_\infty$ in the past, see for instance \cite{richtershipley} and \cite[1.3, 1.4]{mandell}.
We also obtain an equivalence on the $\infty$-categories of coalgebras (see definition in \cite[2.1]{coalgenr}):
\[
\mathcal{CoAlg}_\Oinf \left( \N(\C_c)[\W_\C^{-1}] \right)\simeq \mathcal{CoAlg}_\Oinf \left( \N(\D_c)[\W_\D^{-1}] \right).
\]
Such a result on coalgebras has been shown to be untrue in some model categories, see for instance \cite{sore3} and Section \ref{subsection: rigi failure in connective} below.
\end{rem}

\section{The Rigidification Problem}\label{sec: rigidification setting}

In this section, we want to compare homotopy coherent coalgebras  with their strict analogues. On one hand, given a nice enough symmetric monoidal model category $\M$, we can obtain its Dwyer-Kan localization which is a symmetric monoidal $\infty$-category. We can then define the $\infty$-category of $\ai$ or $\ei$-coalgebras as in \cite[2.1]{coalgenr}. On the other hand, we can consider coalgebras (also called comonoids) in the monoidal category $\M$ in the classical sense and then take their Dwyer-Kan localization as in Definition \ref{def: dk loc C[W-1] of any inf cat}. We are interested to know if the $\infty$-categories are equivalent. 

There are classical rigidification results that compare $\mathbb{A}_\infty$-algebras with their strict associative analogue, see \cite[4.1.8.4]{lurie1}. There is also a comparison between the $\mathbb{E}_\infty$-case with the commutative case in \cite[4.5.4.7]{lurie1}. However, there is no reason to expect that these results dualize in general. In particular, if $\mathbb{A}_\infty$-algebras can be rigidified to strict associative algebras in a model category $\M$, there is no reason to expect that $\mathbb{A}_\infty$-coalgebras correspond to strict coassociative coalgebras in $\M$ as we shall see in Corollary \ref{cor: failure of rigi} and Theorem \ref{thm: perouxshipley result} below.

\subsection{Rigidification Properties}
Let $\C$ be a (symmetric) monoidal category and denote $\C^\o$ its operator category, as in Definition \ref{def: C otimes of a sym mon cat from lurie}. 
Let $p:\C^\o\rightarrow \Delta\op$ be its associated Grothendieck opfibration (see \cite[4.5]{groth}) that determines the monoidal structure of $\C$, and induces the coCartesian fibration $\N(\C^\o)\rightarrow \N(\Delta\op)$. 
There is a correspondence between monoids in $\C$ and sections of $p$ that send convex morphisms to $p$-coCartesian arrows (see \cite[4.21]{groth}).
In particular, we obtain the following equivalence of $\infty$-categories:
\[
\begin{tikzcd}
\N(\mathsf{Mon}(\C)) \ar{r} & \mathcal{Alg}_\ai(\N(\C)).
\end{tikzcd}
\]
By using opposite categories, we obtain therefore an equivalence:
\[
\begin{tikzcd}
\N(\comon(\C)) \ar{r} & \coalginf_\ai(\N(\C)).
\end{tikzcd}
\]
Let $\M$ be a symmetric monoidal model category with cofibrant unit.
Consider $\M_c\subseteq \M$ the full subcategory of cofibrant objects. Apply the above identification to $\C=\M_c$ to obtain the following equivalence in $\infty$-categories:
\[
\begin{tikzcd}
\N\Big( \comon(\M_c) \Big) \ar{r} & \coalginf_\ai( \N(\M_c))
\end{tikzcd}
\] 
Let $\W$ be the class of weak equivalences in $\M$.
By Proposition \ref{prop: compatibility of DK in sym mon} there is a symmetric monoidal functor $\N(\M_c^\o)\rightarrow \N(\M_c) \left[\W^{-1}\right]^\o$, which thus provides a map of $\infty$-categories:
\[
\begin{tikzcd}
\coalginf_\ai( \N(\M_c)) \ar{r} & \coalginf_\ai\left(\N(\M_c) \left[\W^{-1}\right]\right),
\end{tikzcd}
\]
and therefore we obtain a functor of $\infty$-categories:
\[
\begin{tikzcd}
\alpha:\N\Big(\comon(\M_c)\Big) \ar{r} & \coalginf_\ai\left(\N(\M_c) \left[\W^{-1}\right]\right).
\end{tikzcd}
\]
Denote $\W_\comon$ the class of morphisms in $\comon(\M_c)$ that are weak equivalences as underlying morphisms in $\M$. Notice that the above functor $\alpha$ sends $\W_\comon$ to equivalences.
By the universal property of Dwyer-Kan localizations as in Definition \ref{def: dk loc C[W-1] of any inf cat}, we obtain the following natural functor of $\infty$-categories:
\[
\begin{tikzcd}
\alpha:\N\Big(\comon(\M_c)\Big) \left[ \W_\comon^{-1} \right] \ar{r} & \coalginf_\ai\left(\N(\M_c) \left[\W^{-1}\right]\right).
\end{tikzcd}
\]
Similarly, for the cocommutative case we obtain the natural functor of $\infty$-categories:
\[
\begin{tikzcd}
\beta:\N\Big(\ccomon(\M_c)\Big) \left[ \W_\ccomon^{-1} \right] \ar{r} & \coalginf_\ei\left(\N(\M_c) \left[ \W^{-1} \right]\right).
\end{tikzcd}
\]
\begin{defi}\label{defi: coassoc and cocom rigidification}
Let $\M$ be a symmetric monoidal model category with cofibrant unit. Let $\alpha$ and $\beta$ be the functors described above.
If $\alpha$ is an equivalence of $\infty$-categories, we say that the model category $\M$ (or its Dwyer-Kan localization) \emph{ satisfies the coassociative rigidification}.
If $\beta$ is an equivalence of $\infty$-categories, we say that $\M$ (or its Dwyer-Kan localization) \emph{satisfies the cocommutative rigidification}.
\end{defi}

\begin{rem}
In general there is no reason to expect that if a model category $\M$ respects the associative rigidification then it also respects the coassociative rigidification. We see examples below in Corollaries \ref{cor: failure of rigi} and \ref{cor: non rigi  for coassociative}. Since coalgebras in $\M$ are algebras in $\M\op$ this might be surprising to the reader.
However, a key requirement for the associative rigidification is for $\M$ to be presentable, see \cite[4.1.8.4, 4.5.4.7]{lurie1}. But if $\M$ is presentable, then $\M\op$ is \emph{not} presentable, unless $\M$ is a complete lattice, see \cite[1.64]{Adamek-Rosicky}.
\end{rem}

\begin{rem}\label{rem: not easy to do rigidification in full generality}
If we inspect the dual case of algebras \cite[4.1.8.4, 4.5.4.7]{lurie1}, we see that we should have considered the $\infty$-category $\N(\comon(\M)) \left[ \W_\comon^{-1} \right]$ and not the $\infty$-category $\N(\comon(\M_c))\left[ \W_\comon^{-1} \right]$. There are several issues with that. 
\begin{itemize}

\item In general, these $\infty$-categories are not equivalent unless for instance $\M$ admits a functorial lax comonoidal cofibrant replacement. This means there is a functor $Q:\M\rightarrow \M_c$ such that there is a natural map: \[Q(X\otimes Y)\rightarrow Q(X)\otimes Q(Y),\] for any $X$ and $Y$ in $\M$. The main issue is that in general the functor $\N(\M_c^\o)\rightarrow \N(\M^\o)$ is only lax symmetric monoidal, see Remark \ref{rem:Dk SYM loc on cof or not is the same }.
Of course, if all objects in $\M$ are cofibrant, no such issues appear.

\item There is no guarantee to have a model structure on $\comon(\M)$ whose weak equivalences are $\W_\comon$, even when using the left-induced methods from \cite{left2}.
Even though we do not need a model category to define $\N(\comon(\M))\left[ \W_\comon^{-1} \right]$, this would help to determine if there was some compa\-tibility with $\M$.
For instance, if we suppose $\M$ is combinatorial monoidal model category and there exists a model structure on the category of coalgebra so that the forgetful-cofree adjunction:
\[
\begin{tikzcd}
U:\comon(\M) \ar[shift left=2]{r} & \ar[shift left=2]{l}[swap]{\perp} \M:\cofree,
\end{tikzcd}
\]
is a Quillen adjunction, then there exists a functorial cofibrant replacement $\comon(\M)\rightarrow \comon(\M_c)$ that induces an equivalence of $\infty$-categories:
\[
\N(\comon(\M_c))\left[ \W_\comon^{-1} \right] \simeq \N(\comon(\M))\left[ \W_\comon^{-1} \right].
\]
\item In the cases where $\comon(\M)$ does admit a model structure, it is in general left-induced by a model category that is not a monoidal model category. For instance, in chain complexes the left induced-model structure stem from the injective model structure (which is not monoidal) instead of the projective model structure, see \cite{left2}. See also Section \ref{sec: mod structure for coalg in sp} below.
\end{itemize}
All the above also applies to the cocommutative case.
\end{rem}

\subsection{The Cartesian Case}
We provide here a simple case of model categories satisfying the coassociative and cocommutative rigidification in the sense of Definition \ref{defi: coassoc and cocom rigidification}.
Let $(\M, \times, *)$ be a symmetric monoidal model category with respect to its Cartesian monoidal structure.  Let $\W$ be the class of weak equivalences in $\M$. Suppose the terminal object $*$ is cofibrant. 

\begin{prop}\label{prop: works for cartesian}
Let $(\M, \times, *)$ be as above. Then, $\M$ satisfies the coassociative and cocommutative rigidification, i.e. the following natural maps are equivalences of $\infty$-categories:
\[
\begin{tikzcd}
\N(\comon(\M_c)) \left[ \W_\comon^{-1} \right] \ar{r}{\simeq} & \coalginf_\ai\left(\N(\M_c) \left[ \W^{-1} \right]\right),
\end{tikzcd}
\]
\[
\begin{tikzcd}
\N(\ccomon(\M_c)) \left[ \W_\ccomon^{-1} \right] \ar{r}{\simeq} & \coalginf_\ei\left(\N(\M_c) \left[ \W^{-1} \right]\right),
\end{tikzcd}
\]
and all four of the $\infty$-categories above are equivalent to the Dwyer-Kan localization $\N(\M) \left[ \W^{-1} \right]$.
\end{prop}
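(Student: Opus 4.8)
The plan is to exploit the fact that in a Cartesian monoidal setting every object is a cocommutative comonoid in an essentially unique way, both strictly and homotopy-coherently, and then to identify all four $\infty$-categories by comparing the relevant forgetful functors. Concretely, I would fit $\alpha$ and $\beta$ into squares with the forgetful functors down to $\N(\M_c)[\W^{-1}]$ and argue by a two-out-of-three property once both vertical legs are shown to be equivalences.

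First I would record the strict rigidity. Since $*$ is cofibrant and the product of cofibrant objects is cofibrant (apply the pushout-product axiom to $\emptyset\to X$ and $\emptyset\to Y$), the subcategory $(\M_c,\times,*)$ is again Cartesian monoidal. In any Cartesian monoidal category the counit of a comonoid is forced to be the unique map to the terminal object, and the two counit axioms then force the comultiplication to be the diagonal; coassociativity and cocommutativity hold automatically, and every morphism is a morphism of comonoids by naturality of the diagonal. Hence the forgetful functors $\comon(\M_c)\to\M_c$ and $\ccomon(\M_c)\to\M_c$ are isomorphisms of categories carrying $\W_\comon$ and $\W_\ccomon$ onto $\W$. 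Passing to nerves and localizing yields equivalences $\N(\comon(\M_c))[\W_\comon^{-1}]\simeq\N(\M_c)[\W^{-1}]$ and $\N(\ccomon(\M_c))[\W_\ccomon^{-1}]\simeq\N(\M_c)[\W^{-1}]$.

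The heart of the argument, and the step I expect to be the main obstacle, is the homotopy-coherent rigidity: that the symmetric monoidal $\infty$-category $\N(\M_c)[\W^{-1}]^\o$ of Proposition \ref{prop: compatibility of DK in sym mon} is Cartesian in the sense of \cite[2.4.0.1]{lurie1}. For the unit, $*$ is both cofibrant and fibrant, hence a terminal object of $\N(\M_c)[\W^{-1}]$. For the tensor, I must check that the honest product of cofibrant objects computes the $\infty$-categorical product, which is where the homotopical content sits. Choosing a fibrant replacement $X\to X^f$ by a trivial cofibration keeps $X^f$ cofibrant (as $\emptyset\to X\to X^f$ is a cofibration), and since $X\times-$ and $-\times Y^f$ are left Quillen they preserve the weak equivalences $Y\to Y^f$ and $X\to X^f$ between cofibrant objects; thus the composite $X\times Y\to X^f\times Y^f$ (through $X\times Y^f$) is a weak equivalence, exhibiting $X\times Y$ as the homotopy product. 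Together with the terminal unit, this shows the projections exhibit $X\times Y$ as the product in $\N(\M_c)[\W^{-1}]$, so the localized structure is Cartesian.

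Granting this, the dual of \cite[2.4.3.9]{lurie1} and its coassociative counterpart show that in a Cartesian symmetric monoidal $\infty$-category the forgetful functors $\coalginf_\ai(\N(\M_c)[\W^{-1}])\to\N(\M_c)[\W^{-1}]$ and $\coalginf_\ei(\N(\M_c)[\W^{-1}])\to\N(\M_c)[\W^{-1}]$ are equivalences, every object carrying an essentially unique coalgebra structure given by the diagonal. Finally I would observe that $\alpha$ and $\beta$ are, by construction, compatible with the forgetful functors to the underlying ($\infty$-)category, so they fit into commuting squares whose other three edges are the equivalences just established; two-out-of-three forces $\alpha$ and $\beta$ to be equivalences. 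Combining with $\N(\M_c)[\W^{-1}]\simeq\N(\M)[\W^{-1}]$ from Remark \ref{rem: DK loc on cof and fib are same} then identifies all four $\infty$-categories with $\N(\M)[\W^{-1}]$.
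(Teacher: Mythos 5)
Your proof is correct and follows essentially the same route as the paper: identify strict comonoids in a Cartesian monoidal category with its objects, invoke the triviality of $\infty$-coalgebras in a Cartesian symmetric monoidal $\infty$-category, and conclude by two-out-of-three in a commuting square over $\N(\M_c)\left[\W^{-1}\right]$. The only place you go beyond the paper is in explicitly verifying that the localized symmetric monoidal structure is again Cartesian (the unit is terminal, and strict products of cofibrant objects compute homotopy products), a point the paper's proof leaves implicit when it applies \cite[2.4.3.10]{lurie1}.
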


\begin{proof}
For any Cartesian monoidal $\infty$-category $\Cinf$, we have the equivalence:
\[
\coalginf_\ai (\Cinf) \simeq \coalginf_\ei(\Cinf) \simeq \Cinf,
\]
see \cite[2.4.3.10]{lurie1}.
For any Cartesian monoidal (ordinary) category $\C$, we have the isomorphism of categories:
\[
\comon(\C) \cong \ccomon(\C) \cong \C.
\]
We obtain that the natural maps fit in the diagram:
\[
\begin{tikzcd}
\N(\comon(\M_c)) \left[ \W_\comon^{-1} \right] \ar{r}\ar{d}[swap]{\simeq} & \coalginf_\ai\left(\N(\M_c) \left[ \W^{-1} \right]\right)\ar{d}{\simeq}\\
\N(\M_c) \left[ \W^{-1} \right] \ar[equals]{r} & \N(\M_c) \left[ \W^{-1} \right]\\
\N(\ccomon(\M_c)) \left[ \W_\ccomon^{-1} \right] \ar{u}{\simeq}\ar{r} & \coalginf_\ei\left(\N(\M_c) \left[ \W^{-1} \right]\right)\ar{u}[swap]{\simeq}.
\end{tikzcd}
\]
Thus we obtain our desired equivalences.
\end{proof}

\section{Dold-Kan Correspondence For Coalgebras}
\label{section: DK loc for coalgebras}

We now apply Theorem \ref{thm: Weak monoidal eq imply strong in infinity} to a weak monoidal Quillen equivalence induced by the Dold-Kan correspondence as in \cite{monmodSS}, all missing details can be found there.
From now on, let $R$ be a commutative discrete ring.
Let $\mathsf{sMod}_R$ denote the category of simplicial $R$-modules, and let $\mathsf{Ch}^{\geq 0}_R$ denote the category of non-negatively graded chain complexes. 
The \emph{Dold-Kan correspondence} says that the \emph{normalization functor}:
\begin{equation}\label{eq: Dold-Kan equivalence}
\begin{tikzcd}
\mathsf{N}:\mathsf{sMod}_R \ar{r}{\cong} &  \mathsf{Ch}^{\geq 0}_R,
\end{tikzcd}
\end{equation}
is an equivalence of categories.
Its inverse functor is denoted $\Gamma: \mathsf{Ch}^{\geq 0}_R \rightarrow \mathsf{sMod}_R$.

We show in Corollary \ref{cor: dold kan colag} that if we derive the Dold-Kan correspondence, then we obtain a correspondence between the coalgebraic objects. Moreover, Theorem \ref{thm: Weak monoidal eq imply strong in infinity} clarifies the equivalence of the stable Dold-Kan correspondence, see Theorem \ref{cor: stable dold-kan eq}. Comparing this to a result of \cite{sore3}, we obtain that $\ai$-coalgebras of connective modules over a discrete commutative ring cannot be rigidified in the Dold-Kan context, see Section \ref{cor: failure of rigi}.

\subsection{The Derived Dold-Kan Equivalence}\label{DKloc: derived DK eq}
We endow each category with a model structure as follows. 
For $\mathsf{sMod}_R$, the weak equivalences and fibrations are the underlying weak equivalences and fibrations in simplicial sets, i.e., they are weak homotopy equivalences and Kan fibrations. 
For $\mathsf{Ch}^{\geq 0}_R$, we use the usual projective model structure. The weak equivalences are the quasi-isomorphisms, and the fibrations are the positive levelwise epimorphisms.
Both categories can be endowed with their usual symmetric monoidal structure induced by the tensor product of $R$-modules. However, the Dold-Kan equivalence (\ref{eq: Dold-Kan equivalence}) does \emph{not} preserve the monoidal structure.
Nonetheless, with respect to the above choice of model structures, the categories $\mathsf{sMod}_R$ and $\mathsf{Ch}^{\geq 0}_R$ are both symmetric monoidal model categories with cofibrant units.
The isomorphism of categories from (\ref{eq: Dold-Kan equivalence}) can be regarded now as either of two Quillen equivalences, depending on the choice of left and right adjoints:
\begin{equation}\label{eq: dold-kan sym mon}
\begin{tikzcd}[column sep=large]
\mathsf{Ch}^{\geq 0}_R \ar[shift left=2]{r}{\Gamma} & \ar[shift left=2]{l}{\mathsf{N}}[swap]{\perp} \mathsf{sMod}_R,
\end{tikzcd}
\end{equation}
and:
\begin{equation}\label{eq: dold-kan lax mon}
\begin{tikzcd}[column sep=large]
\mathsf{sMod}_R \ar[shift left=2]{r}{\mathsf{N}} & \ar[shift left=2]{l}{\Gamma}[swap]{\perp}\mathsf{Ch}^{\geq 0}_R .
\end{tikzcd}
\end{equation}
If we choose the normalization functor $\mathsf{N}$ to be the right adjoint as in (\ref{eq: dold-kan sym mon}), then it can be considered as lax symmetric monoidal via the shuffle map. 
If we choose $\mathsf{N}$ to be the left adjoint as in (\ref{eq: dold-kan lax mon}), then the Alexander-Whitney formula gives a lax comonoidal structure which is \emph{not} symmetric.

Nevertheless, this shows that both Quillen equivalences form a weak monoidal Quillen equivalence with cofibrant units, which is symmetric in the case where $\mathsf{N}$ is a right adjoint (\ref{eq: dold-kan sym mon}).
We can therefore apply Theorem \ref{thm: Weak monoidal eq imply strong in infinity} to obtain the following.

\begin{cor}[The Derived Dold-Kan Correspondence]\label{cor: derived dold-kan eq}
Let $R$ be a commutative discrete ring.
Then the Dwyer-Kan localizations of $\mathsf{sMod}_R$ and $\mathsf{Ch}^{\geq 0}_R$ are equivalent as symmetric monoidal $\infty$-categories:
\[
 \N\left(\mathsf{sMod}_R\right) \left[\W_\Delta^{-1}\right] \simeq \N\left(\mathsf{Ch}^{\geq 0}_R\right)\left[\W_\mathsf{dg}^{-1} \right],
\]
via the left Quillen derived functor $\Gamma: \mathsf{Ch}^{\geq 0}_R\rightarrow \mathsf{sMod}_R$ from the Quillen equivalence of \textup{(\ref{eq: dold-kan sym mon})},
where $\W_\Delta$ is the class of weak homotopy equivalences between simplicial $R$-modules, and $\W_\mathsf{dg}$ is the class of quasi-isomorphisms between non-negatively graded chain complexes over $R$.
\end{cor}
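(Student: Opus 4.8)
The plan is to recognize this statement as a direct instance of Theorem \ref{thm: Weak monoidal eq imply strong in infinity}, so that the work reduces to checking that the Quillen equivalence (\ref{eq: dold-kan sym mon}) meets every hypothesis of that theorem. First I would confirm the structural hypotheses: as recalled in the preceding discussion, both $\mathsf{sMod}_R$ and $\mathsf{Ch}^{\geq 0}_R$ are symmetric monoidal model categories, and their units---the constant simplicial module $R$ and the complex $R$ concentrated in degree zero---are cofibrant. The underlying adjunction $(\Gamma, \mathsf{N})$ is a Quillen equivalence because $\mathsf{N}$ and $\Gamma$ are mutually inverse equivalences of categories by (\ref{eq: Dold-Kan equivalence}); in particular both functors preserve and reflect all weak equivalences, so the Quillen pair condition and the Quillen equivalence condition are immediate once one records the standard fact that this adjunction is Quillen for the chosen model structures.

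The substantive point is the weak monoidal condition of Definition \ref{def: weak mon qui pair} for $L = \Gamma$. Since I am using the orientation (\ref{eq: dold-kan sym mon}) in which $\mathsf{N}$ is the right adjoint and lax symmetric monoidal via the shuffle map, the left adjoint $\Gamma$ carries the mate comonoidal structure, giving natural maps $\Gamma(X \otimes Y) \rightarrow \Gamma(X) \otimes \Gamma(Y)$ and $\Gamma(R) \rightarrow R$. To verify condition \ref{enum: weak monoidal} I would check that these comonoidal maps are quasi-isomorphisms on cofibrant $X$ and $Y$; condition \ref{enum; weak monoidal 2} reduces to the (trivially satisfied) unit comparison, since the units are already cofibrant and $\Gamma(R) \cong R$ is the unit of $\mathsf{sMod}_R$. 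Both facts are a consequence of the Eilenberg--Zilber theorem: the shuffle and Alexander--Whitney natural transformations are mutually chain-homotopy-inverse quasi-isomorphisms, so the mate of the shuffle map is a weak equivalence on all objects. This computation is precisely the one carried out by Schwede and Shipley in \cite{monmodSS}, so I would cite it rather than reproduce it.

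With these checks in place, Theorem \ref{thm: Weak monoidal eq imply strong in infinity} applies verbatim and yields a symmetric monoidal equivalence of $\infty$-categories between $\N(\mathsf{sMod}_R)[\W_\Delta^{-1}]$ and $\N(\mathsf{Ch}^{\geq 0}_R)[\W_\mathsf{dg}^{-1}]$, realized by the derived functor of $\Gamma$, as claimed. The one subtlety I would flag---and the only place where the argument is sensitive to more than a formal invocation of the theorem---is the choice of direction. The Alexander--Whitney map providing the comonoidal structure in (\ref{eq: dold-kan lax mon}) is \emph{not} symmetric, so that orientation yields only an associative weak monoidal Quillen equivalence; it is the symmetric shuffle map underlying (\ref{eq: dold-kan sym mon}) that produces a \emph{symmetric} weak monoidal Quillen equivalence. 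This is why the equivalence must be stated through $\Gamma$ as the left adjoint, and why verifying that the shuffle-induced comonoidal structure on $\Gamma$ is a weak equivalence is the real content behind the otherwise routine reduction to Theorem \ref{thm: Weak monoidal eq imply strong in infinity}.
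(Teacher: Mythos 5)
Your proposal is correct and follows the same route as the paper: the paper likewise deduces the corollary by verifying that the adjunction (\ref{eq: dold-kan sym mon}), with $\Gamma$ as left adjoint and $\mathsf{N}$ lax symmetric monoidal via the shuffle map, is a weak monoidal Quillen equivalence with cofibrant units (citing \cite{monmodSS} for the Eilenberg--Zilber verification) and then applying Theorem \ref{thm: Weak monoidal eq imply strong in infinity}. Your remark on why the Alexander--Whitney orientation (\ref{eq: dold-kan lax mon}) fails to be symmetric matches the paper's own discussion of the two possible orientations.
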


In particular, applying Remark \ref{rem: weak monoidal eq imply eq of coalgebras}, we get the following result.

\begin{cor}[Dold-Kan Correspondence For Coalgebras]\label{cor: dold kan colag}
For any $\infty$-operad $\Oinf$, the normalization functor induces an equivalence of $\infty$-categories:
\[
\begin{tikzcd}
\mathcal{CoAlg}_\Oinf\left( \N\left(\mathsf{sMod}_R\right) \left[\W_\Delta^{-1}\right]\right) \ar{r}{\simeq}&  \mathcal{CoAlg}_\Oinf\left(\N\left(\mathsf{Ch}^{\geq 0}_R\right)\left[\W_\mathsf{dg}^{-1} \right]\right).
\end{tikzcd}
\]
\end{cor}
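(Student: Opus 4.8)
The plan is to obtain the statement as a formal consequence of the symmetric monoidal equivalence established in Corollary \ref{cor: derived dold-kan eq}, combined with the functoriality of $\Oinf$-coalgebras recorded in Remark \ref{rem: weak monoidal eq imply eq of coalgebras}; no further input from the specific model categories is needed beyond what Corollary \ref{cor: derived dold-kan eq} already supplies. First I would invoke Corollary \ref{cor: derived dold-kan eq}, which gives that the derived functor of $\Gamma$ from the weak monoidal Quillen equivalence (\ref{eq: dold-kan sym mon}) is a symmetric monoidal equivalence of $\infty$-categories
\[
\N\left(\mathsf{Ch}^{\geq 0}_R\right)\left[\W_\mathsf{dg}^{-1} \right] \stackrel{\simeq}{\longrightarrow} \N\left(\mathsf{sMod}_R\right) \left[\W_\Delta^{-1}\right],
\]
whose inverse is the derived normalization functor. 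The crucial point I would emphasize here is that this is an equivalence of \emph{symmetric monoidal} $\infty$-categories, i.e.\ a symmetric monoidal functor over $\N(\Fin)$ that is an equivalence of underlying $\infty$-categories, and not merely an equivalence of the underlying $\infty$-categories; this strengthening is exactly the content of Theorem \ref{thm: Weak monoidal eq imply strong in infinity}, and it is what permits passage to coalgebras.

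Next I would apply the construction $\coalginf_\Oinf(-)$ of \cite[2.1]{coalgenr}. Since $\Oinf$-coalgebras in a symmetric monoidal $\infty$-category $\Cinf^\o$ are defined as $\Oinf$-algebras in the opposite symmetric monoidal $\infty$-category $\left(\Cinf^\o\right)\op$, I would argue in two formal steps. A symmetric monoidal equivalence $F:\Cinf^\o\to\Dinf^\o$ induces a symmetric monoidal equivalence on opposites $\left(\Cinf^\o\right)\op\to\left(\Dinf^\o\right)\op$, because forming the opposite preserves coCartesian fibrations over $\N(\Fin)$, preserves coCartesian edges, and carries categorical equivalences to categorical equivalences; and then $\alginf_\Oinf(-)$ is functorial in symmetric monoidal functors and sends equivalences to equivalences. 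Composing these, $F$ induces an equivalence $\coalginf_\Oinf(\Cinf)\simeq\coalginf_\Oinf(\Dinf)$. Specializing $F$ to the derived normalization functor (the inverse of the equivalence displayed above) yields the asserted equivalence, which is precisely the instance of Remark \ref{rem: weak monoidal eq imply eq of coalgebras} for the Dold-Kan weak monoidal Quillen equivalence.

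The one genuinely delicate point, and the step I would treat most carefully, is the reliance on the equivalence being symmetric monoidal rather than merely underlying. The Dold-Kan equivalence is not monoidal on the nose, and the Alexander-Whitney lax comonoidal structure of (\ref{eq: dold-kan lax mon}) fails to be symmetric; it is the shuffle-map presentation (\ref{eq: dold-kan sym mon}), together with Theorem \ref{thm: Weak monoidal eq imply strong in infinity}, that produces a bona fide symmetric monoidal equivalence of Dwyer-Kan localizations. Once that is secured, the passage through opposites to $\Oinf$-algebras, and hence to $\Oinf$-coalgebras, is entirely formal and uniform in the $\infty$-operad $\Oinf$.
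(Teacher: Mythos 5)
Your proposal is correct and follows essentially the same route as the paper: the paper derives this corollary in one line by applying Remark \ref{rem: weak monoidal eq imply eq of coalgebras} to the symmetric monoidal equivalence of Corollary \ref{cor: derived dold-kan eq}, which is exactly your argument. Your expansion of the formal step (passing to opposite symmetric monoidal $\infty$-categories and using functoriality of $\mathcal{Alg}_\Oinf$) and your emphasis on needing the shuffle-map presentation (\ref{eq: dold-kan sym mon}) rather than the non-symmetric Alexander--Whitney structure are both faithful elaborations of what the paper leaves implicit.
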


\subsection{Rigidification Failure of Connective Coalgebras}\label{subsection: rigi failure in connective}
The above result bypasses a difficulty on the level of model categories and strict coalgebras.
If we choose the second adjunction (\ref{eq: dold-kan lax mon}) as a weak Quillen monoidal equivalence, then the normalization functor, being lax comonoidal, lifts to coalgebras: \[\mathsf{N}:\coalg_R \left(\mathsf{sMod}_R\right)\rightarrow \coalg_R ( \mathsf{Ch}^{\geq 0}_R ),\] 
but its inverse $\Gamma$, being only lax monoidal, does not lift to coalgebras. 
Nevertheless, a right adjoint exists on the level of $R$-coalgebras, either by presentability, or using dual methods as in section 3.3 of \cite{monmodSS}. We shall denote it by $\Gamma_\coalg$. 
Then, using left-induced methods of \cite{left2}, we obtain model structures and a Quillen adjunction:
\[
\begin{tikzcd}[column sep=large]
\coalg_R \left(\mathsf{sMod}_R\right) \ar[shift left =2]{r}{\mathsf{N}}[swap]{\perp} & \ar[shift left=2]{l}{\Gamma_\coalg} \coalg_R \left( \mathsf{Ch}^{\geq 0}_R \right).
\end{tikzcd}
\]
The weak equivalences are the underlying weak equivalences and every object is cofibrant, in both model categories.
However, it was shown in \cite[4.16]{sore3} that the above Quillen pair is \emph{not} a Quillen equivalence, at least when $R$ is a field. It was shown that for a particular choice of fibrant object $C$ in $\coalg_R ( \mathsf{Ch}^{\geq 0}_R)$, the counit $\mathsf{N}\left( \Gamma_\coalg \left( C \right) \right) \longrightarrow C$ is not a weak equivalence (i.e. not a quasi-isomorphism).

Therefore, the normalization functor does not induce an equivalence of $\infty$-categories on the Dwyer-Kan localizations:
\[ 
\begin{tikzcd}[column sep= small]
\mathsf{N}:\N\Big(\comon(\mathsf{sMod_R})\Big) \left[ \W_{\Delta, \mathsf{CoAlg}}^{-1} \right] \ar{r}{\not\simeq} & \N\Big(\comon(\mathsf{\ch}_R^{\geq 0})\Big) \left[ \W_{\mathsf{dg}, \mathsf{CoAlg}}^{-1} \right].
\end{tikzcd}
\]
Here $\W_{\Delta, \mathsf{CoAlg}}\subseteq \W_\Delta$ and $\W_{\mathsf{dg}, \mathsf{CoAlg}}\subseteq \W_\mathsf{dg}$ denote the subclasses of their respective weak equivalences between coalgebra objects.
However, Corollary \ref{cor: dold kan colag} shows that the normalization functor induces an equivalence between the $\mathbb{A}_\infty$-coalgebras:
\[
\begin{tikzcd}
\mathsf{N}:\mathcal{CoAlg}_{\mathbb{A}_\infty}\left( \N\left(\mathsf{sMod}_R\right) \left[\W_\Delta^{-1}\right]\right) \ar{r}{\simeq} &  \mathcal{CoAlg}_{\mathbb{A}_\infty}\left(\N\left(\mathsf{Ch}^{\geq 0}_R\right)\left[\W_\mathsf{dg}^{-1} \right]\right).
\end{tikzcd}
\]
By \cite[7.1.3.10]{lurie1}, the $\infty$-categories $\N(\mathsf{sMod}_R) [\W_\Delta^{-1}]$ and  $\N(\mathsf{Ch}^{\geq 0}_R)[\W_\mathsf{dg}^{-1}]$ represent the symmetric monoidal $\infty$-category $\mathcal{Mod}_{HR}^{\geq 0}$ of $HR$-modules in connective spectra. From our above discussion, we obtain the following result which says that we cannot rigidify coassociative coalgebras in one of the model categories $\ch_R^{\geq 0}$ or $\mathsf{sMod}_R$.

\begin{cor}\label{cor: failure of rigi}
Let $R$ be a commutative ring.
Consider $\mathsf{sMod}_R$ and $\mathsf{Ch}^{\geq 0}_R$ with their standard monoidal model structures.
Then one of these two categories, which we denote $\M$, does not satisfy the coassociative rigidification, i.e.:
\[
\begin{tikzcd}
\N(\coalg(\M_c)) \left[\W^{-1}\right] \ar{r}{\not\simeq} & \coalginf_{\mathbb{A}_\infty} \left(\mathcal{Mod}_{HR}^{\geq 0}\right),
\end{tikzcd}
\] 
where $\W$ is the class of maps of coalgebras which are weak equivalences in $\M$.
\end{cor}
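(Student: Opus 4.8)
The plan is to deduce the non-rigidification from a ``two-out-of-three'' argument applied to a commuting square that links the strict coalgebra picture to the $\ai$-coalgebra picture through the derived normalization. First I would dispose of a bookkeeping point: since every object of $\mathsf{sMod}_R$ is cofibrant, and since for $\mathsf{Ch}^{\geq 0}_R$ the forgetful--cofree adjunction $U\dashv\cofree$ on strict coalgebras is a Quillen adjunction for the left-induced model structure of \cite{left2}, Remark \ref{rem: not easy to do rigidification in full generality} gives equivalences $\N(\comon(\M_c))[\W_\comon^{-1}]\simeq\N(\comon(\M))[\W_\comon^{-1}]$ for $\M\in\{\mathsf{sMod}_R,\mathsf{Ch}^{\geq 0}_R\}$. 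Thus I may work with $\comon(\M)$, where the strict normalization functor $\mathsf{N}$ and the result of \cite{sore3} both live. Writing $\alpha_\Delta$ and $\alpha_\mathsf{dg}$ for the two instances of the rigidification functor of Definition \ref{defi: coassoc and cocom rigidification}, I would assemble the square
\[
\begin{tikzcd}[column sep=large]
\N(\comon(\mathsf{sMod}_R))[\W_{\Delta,\mathsf{CoAlg}}^{-1}] \ar{r}{\alpha_\Delta} \ar{d}[swap]{\mathsf{N}} & \coalginf_\ai\big(\N(\mathsf{sMod}_R)[\W_\Delta^{-1}]\big) \ar{d}{\mathbb{N}} \\
\N(\comon(\mathsf{Ch}^{\geq 0}_R))[\W_{\mathsf{dg},\mathsf{CoAlg}}^{-1}] \ar{r}{\alpha_\mathsf{dg}} & \coalginf_\ai\big(\N(\mathsf{Ch}^{\geq 0}_R)[\W_\mathsf{dg}^{-1}]\big),
\end{tikzcd}
\]
in which the left vertical arrow is induced by the lax comonoidal (Alexander--Whitney) normalization functor on strict coalgebras and the right vertical arrow is the derived normalization $\mathbb{N}$ on $\ai$-coalgebras.

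Establishing the commutativity of this square up to natural equivalence is the step I expect to be the main obstacle, because the normalization carries two a priori different lax structures in the Dold--Kan story: as the right adjoint of (\ref{eq: dold-kan sym mon}) it is lax \emph{symmetric monoidal} via the shuffle map, whereas as the left adjoint of (\ref{eq: dold-kan lax mon}) it is lax \emph{comonoidal} (and non-symmetric) via Alexander--Whitney, and it is the latter that produces the left vertical functor. Both horizontal functors are built from the symmetric monoidal localization $\N(\M^\o)\to\N(\M)[\W^{-1}]^\o$ of Proposition \ref{prop: compatibility of DK in sym mon} together with the identification of strict coalgebras with $\ai$-coalgebras in the nerve. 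To reconcile the two structures I would invoke the Eilenberg--Zilber theorem: the shuffle and Alexander--Whitney maps are mutually inverse homotopy equivalences, so after Dwyer--Kan localization they become mutually inverse equivalences in the symmetric monoidal $\infty$-category $\N(\M)[\W^{-1}]^\o$. Consequently the comonoidal lift of $\mathsf{N}$ to strict coalgebras is carried by $\alpha_\mathsf{dg}$ to the symmetric monoidal derived functor $\mathbb{N}$ acting on $\ai$-coalgebras; this is precisely the naturality of the $\alpha$-construction with respect to the normalization functor, and it is what makes the square commute.

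Granting commutativity, the conclusion is formal. The functor $\mathbb{N}$ is an equivalence by Corollary \ref{cor: dold kan colag} applied to $\Oinf=\ai$. On the other hand, the left vertical functor $\mathsf{N}$ is \emph{not} an equivalence: by \cite[4.16]{sore3} the induced Quillen pair on strict coalgebras fails to be a Quillen equivalence, which is exactly the assertion that $\mathsf{N}$ fails to be an equivalence on the Dwyer--Kan localizations $\N(\comon(-))[\W_\comon^{-1}]$. Were both $\alpha_\Delta$ and $\alpha_\mathsf{dg}$ equivalences, the commuting square would exhibit $\mathsf{N}$ as the composite $\alpha_\mathsf{dg}^{-1}\circ\mathbb{N}\circ\alpha_\Delta$ of equivalences, contradicting \cite{sore3}. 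Hence at least one of $\alpha_\Delta,\alpha_\mathsf{dg}$ is not an equivalence; denoting the corresponding model category by $\M$, the category $\M$ fails the coassociative rigidification of Definition \ref{defi: coassoc and cocom rigidification}. Finally, the identification $\N(\mathsf{sMod}_R)[\W_\Delta^{-1}]\simeq\N(\mathsf{Ch}^{\geq 0}_R)[\W_\mathsf{dg}^{-1}]\simeq\mathcal{Mod}_{HR}^{\geq 0}$ from \cite[7.1.3.10]{lurie1} rewrites the common target as $\coalginf_\ai(\mathcal{Mod}_{HR}^{\geq 0})$, yielding the stated non-equivalence $\N(\coalg(\M_c))[\W^{-1}]\not\simeq\coalginf_\ai(\mathcal{Mod}_{HR}^{\geq 0})$.
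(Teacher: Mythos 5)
Your proposal is correct and follows essentially the same route as the paper: the paper's ``proof'' is precisely the discussion preceding the corollary, which contrasts the failure of the strict normalization to be a Quillen equivalence on coalgebras (\cite[4.16]{sore3}) with the equivalence of $\ai$-coalgebra $\infty$-categories from Corollary \ref{cor: dold kan colag}, and concludes by the same two-out-of-three argument on the comparison square. If anything you are more explicit than the paper, which leaves both the commutativity of the square (your Eilenberg--Zilber reconciliation of the shuffle and Alexander--Whitney structures) and the passage between $\comon(\M)$ and $\comon(\M_c)$ implicit; just note that your claim that every object of $\mathsf{sMod}_R$ is cofibrant only holds when $R$ is a field, which is in any case the only setting in which the cited input from \cite{sore3} is established.
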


\subsection{The Stable Dold-Kan Correspondence}

Our approach also gives a new proof of the \emph{stable Dold-Kan correspondence}. This well-known result was formalized with $\infty$-categories in {\cite[7.1.2.13]{lurie1}} as follows. Let $R$ be a commutative discrete ring. Then the $\infty$-category of $HR$-modules in spectra $\mathcal{Mod}_{HR}$ is equivalent to $\infty$-category of derived $R$-modules $\mathcal{D}(R)$ as symmetric monoidal $\infty$-categories: $\mathcal{Mod}_{HR} \simeq \mathcal{D}(R)$.
However, the equivalence was not described explicitly in \cite{lurie1}.
In \cite[2.10]{hzalgshipley}, Shipley provided an explicit zig-zag of (weak monoidal) Quillen equivalences between the standard model category $\mathsf{Mod}_{HR}$  of $HR$-modules in symmetric spectra and the projective model category of chain complexes over $R$:
\[
\begin{tikzcd}
\mathsf{Mod}_{HR}  \ar[shift left =2]{r}{}[swap]{\perp} & \ar[shift left=2]{l} \mathsf{Sp}^\Sigma\left(\mathsf{sMod}_R\right) \ar[shift left=2]{d} & \\
& \mathsf{Sp}^\Sigma\left(\mathsf{Ch}^{\geq 0}_R\right)\ar[shift left=2]{u}[swap]{\dashv} \ar[shift left =2]{r}{}[swap]{\perp} &  \ar[shift left=2]{l}\mathsf{Ch}_R.
\end{tikzcd}
\]
Notice that the Dwyer-Kan localizations of $\mathsf{Mod}_{HR}$ and $\mathsf{Ch}_R$ are precisely the $\infty$-categories $\mathcal{Mod}_{HR}$ and $\mathcal{D}(R)$ respectively.
If we derive and combine the Quillen functors above, we obtain a functor of $\infty$-categories $\Theta:\mathcal{Mod}_{HR} \rightarrow \mathcal{D}(R)$.
Recall that both $\mathsf{sMod}_R$ and $\mathsf{Ch}^{\geq 0}_R$ are left proper cellular symmetric monoidal model categories. 
Recall from \cite[B.3]{parsp} that there is a compatibility with stabilization and the Dwyer-Kan localization of a left proper cellular simplicial symmetric monoidal model category. Combining  the above with Corollary \ref{cor: derived dold-kan eq} and Theorem \ref{thm: Weak monoidal eq imply strong in infinity} yields the following.

\begin{cor}[The Stable Dold-Kan Correspondence]\label{cor: stable dold-kan eq}
Let $R$ be a commutative discrete ring. Then the $\infty$-category of $HR$-modules is equivalent to the $\infty$-category of derived  $R$-modules as symmetric monoidal $\infty$-categories via the functor:\[
\Theta:\mathcal{Mod}_{HR}\stackrel{\simeq}\longrightarrow \mathcal{D}(R).\]
Moreover, the functor induces an equivalence for any $\infty$-operad $\Oinf$:
\[
\coalginf_\Oinf\left(\mathcal{Mod}_{HR}\right)\stackrel{\simeq}\longrightarrow \coalginf_\Oinf\left(\mathcal{D}(R)\right).
\]
\end{cor}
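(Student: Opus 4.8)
The plan is to run Shipley's zig-zag through Theorem~\ref{thm: Weak monoidal eq imply strong in infinity} one stage at a time and then splice the resulting symmetric monoidal equivalences together. First I would record that each of the three displayed Quillen adjunctions is a weak monoidal Quillen equivalence with cofibrant units: Shipley establishes this in \cite[2.10]{hzalgshipley}, the two spectral categories $\SpS(\mathsf{sMod}_R)$ and $\SpS(\mathsf{Ch}^{\geq 0}_R)$ being symmetric monoidal model categories with cofibrant units by the left proper cellular hypotheses recalled above. Theorem~\ref{thm: Weak monoidal eq imply strong in infinity} then produces, for each of the three stages, a symmetric monoidal equivalence of the associated Dwyer-Kan localizations.

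Next I would identify the four Dwyer-Kan localizations that occur. The two outer ones are $\N(\mathsf{Mod}_{HR})[\W^{-1}]\simeq \mathcal{Mod}_{HR}$ and $\N(\mathsf{Ch}_R)[\W^{-1}]\simeq \mathcal{D}(R)$, as already noted in the excerpt. For the two inner, stabilized categories I would invoke \cite[B.3]{parsp}: the symmetric monoidal Dwyer-Kan localization of $\SpS(\mathsf{sMod}_R)$ (respectively $\SpS(\mathsf{Ch}^{\geq 0}_R)$) is the stabilization of the symmetric monoidal $\infty$-category $\N(\mathsf{sMod}_R)[\W_\Delta^{-1}]$ (respectively $\N(\mathsf{Ch}^{\geq 0}_R)[\W_\mathsf{dg}^{-1}]$). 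Applying stabilization to the symmetric monoidal equivalence of Corollary~\ref{cor: derived dold-kan eq} then confirms that the middle symmetric monoidal equivalence is exactly the stabilized derived Dold-Kan correspondence, so that the inner identification is the expected one rather than an a~priori unknown equivalence.

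Composing the three symmetric monoidal equivalences, inverting the two that point the wrong way along the zig-zag (which is legitimate since equivalences of symmetric monoidal $\infty$-categories are invertible), yields the desired symmetric monoidal functor $\Theta:\mathcal{Mod}_{HR}\stackrel{\simeq}\longrightarrow\mathcal{D}(R)$. For the final coalgebra statement I would argue formally: for any $\infty$-operad $\Oinf$ the assignment $\Cinf^\o\mapsto \coalginf_\Oinf(\Cinf)$ is functorial in symmetric monoidal functors and carries symmetric monoidal equivalences to equivalences, so $\Theta$ induces an equivalence $\coalginf_\Oinf(\mathcal{Mod}_{HR})\stackrel{\simeq}\longrightarrow\coalginf_\Oinf(\mathcal{D}(R))$. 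Equivalently, one applies the coalgebra half of Remark~\ref{rem: weak monoidal eq imply eq of coalgebras} at each stage of the zig-zag and composes the resulting equivalences.

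The main obstacle I expect lies not in the composition but in the stabilization bookkeeping of the second paragraph: one must be certain that passing from a symmetric monoidal model category to its symmetric spectra $\SpS(-)$ is compatible with forming symmetric monoidal Dwyer-Kan localizations, and that the units remain cofibrant so that Theorem~\ref{thm: Weak monoidal eq imply strong in infinity} genuinely applies at the spectral level. This compatibility is precisely what we borrow from \cite[B.3]{parsp}; without it the inner maps would still be symmetric monoidal equivalences of \emph{some} $\infty$-categories, but we could not recognize their sources and targets as the stabilizations $\mathcal{Mod}_{HR}$ and $\mathcal{D}(R)$ named in the statement.
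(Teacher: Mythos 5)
Your proposal is correct and follows essentially the same route as the paper: the paper likewise obtains the corollary by applying Theorem~\ref{thm: Weak monoidal eq imply strong in infinity} to each stage of Shipley's zig-zag, using the stabilization compatibility of \cite[B.3]{parsp} together with Corollary~\ref{cor: derived dold-kan eq} to identify the inner Dwyer-Kan localizations, and invoking Remark~\ref{rem: weak monoidal eq imply eq of coalgebras} for the coalgebra statement. Your version merely spells out the composition and bookkeeping in more detail than the paper's brief prefatory paragraph.
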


\section{Coalgebras in Spectra}\label{sec: non rigi for spectra}
Based on the main result of \cite{perouxshipley}, we prove here, in Corollary \ref{cor: non rigi  for coassociative}, that the monoidal model categories of symmetric spectra (see \cite{SS}), orthogonal spectra (see \cite{MMSS} \cite{MM}), $\Gamma$-spaces (see \cite{Segal} \cite{BousfieldFrie}), and $\mathscr{W}$-spaces (see \cite{Anderson}), do not respect the coassociative rigidification, in the sense of Definition \ref{defi: coassoc and cocom rigidification}.
In other words, the strictly coassociative counital coalgebras in these monoidal categories of spectra do \emph{not} have the correct homotopy type. 

We work with the symmetric monoidal model category of symmetric spectra, denoted $\SpS$ (see \cite{SS}), and claim that similar results can be obtained with the other categories mentioned above, following \cite{perouxshipley}.
Notice that we have the equivalence of $\infty$-categories:
\[
\N(\SpS)[\mathsf{W}^{-1}] \simeq \mathcal{Sp},
\]
where $\W$ is the class of stable equivalences of symmetric spectra, and $\mathcal{Sp}$ is the $\infty$-category of spectra as in \cite[1.4.3.1]{lurie1}.

\subsection{Model Structures for Coalgebras}\label{sec: mod structure for coalg in sp}
Although not necessary to show the non-rigidification, as seen in Remark \ref{rem: not easy to do rigidification in full generality}, we provide a model category for coalgebras and cocommutative coalgebras in symmetric spectra.
We shall use the left-induced methods from \cite{left2}.
In \cite[Section 5]{SS} there is a simplicial, combinatorial model structure on $\SpS$ with all objects cofibrant called the \emph{(absolute) injective stable model structure}, see also \cite[Remark III.4.13]{schwede-book}. 

\begin{prop}[{\cite[5.0.1, 5.0.2]{left2}}]
For any $\bS$-algebra $A$ in $\SpS$, there exists an injective model structure on $\Mod_A(\SpS)$ left-induced from the injective stable model structure on $\SpS$:
\[
\begin{tikzcd}[column sep=large]
\Mod_A(\SpS) \ar[shift left=2]{r}{U} & \ar[shift left=2]{l}{\Hom_{\SpS}(A,-)}[swap]{\perp} \SpS,
\end{tikzcd}
\]
with cofibrations the monomorphisms and weak equivalences the stable equivalences. This model structure on $\Mod_A(\SpS)$ is simplicial and combinatorial.
\end{prop}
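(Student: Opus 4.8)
The plan is to realize the asserted structure as the \emph{left-induced} model structure along the forgetful functor, which is the left adjoint $U\co \Mod_A(\SpS)\to\SpS$ of the cofree-module functor $\Hom_{\SpS}(A,-)$, and to establish its existence by the dual Kan-transfer (``cylinder object'') argument that underlies the criterion of \cite{left2}. First I would collect the formal inputs. Since $\SpS$ is locally presentable and $A$ is a monoid for the smash product, which preserves colimits in each variable, the category $\Mod_A(\SpS)$ is locally presentable and bicomplete, and $U$ creates limits and filtered colimits; in particular $U$ is faithful and detects and reflects monomorphisms. One then \emph{defines} the prospective weak equivalences and cofibrations of $\Mod_A(\SpS)$ to be the maps sent by $U$ to stable equivalences and to monomorphisms respectively; by the previous remark the prospective cofibrations are exactly the monomorphisms of $A$-modules, and the prospective weak equivalences are exactly the stable equivalences.

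Next I would dispose of the routine hypotheses of the recognition theorem for combinatorial model categories. In the locally presentable category $\Mod_A(\SpS)$ the class of monomorphisms is cofibrantly generated by a set $I_A$ of monomorphisms between presentable objects, so the cofibrations are controlled by the small object argument. The prospective weak equivalences are the preimage under the accessible functor $U$ of the stable equivalences, which form an accessible, accessibly embedded subcategory of the arrow category of $\SpS$; hence they are accessible, satisfy two-out-of-three, and are closed under retracts. Granting these, the existence of the (combinatorial) left-induced model structure reduces to a single \emph{acyclicity condition}: every map of $A$-modules with the right lifting property against all monomorphisms is a stable equivalence on underlying spectra.

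The acyclicity condition is the crux. To verify it I would use two features of the absolute injective stable model structure: it is \emph{simplicial} and \emph{all of its objects are cofibrant}. The simplicial tensoring descends to modules, since for a simplicial set $K$ and an $A$-module $E$ the copower $E\otimes K$ is again an $A$-module with $U(E\otimes K)=UE\otimes K$; thus every $A$-module $E$ has a cylinder $E\otimes\Delta^1$ whose underlying spectrum is a genuine cylinder object for $UE$, because $UE$ is cofibrant. Now suppose $p\co E\to B$ has the right lifting property against all monomorphisms of $A$-modules. Lifting against the monomorphism $0\to B$ yields a section $s$ with $p s=1_B$, and lifting against the monomorphism $E\sqcup E\to E\otimes\Delta^1$ (a cofibration by the simplicial axiom, since $UE$ is cofibrant) yields a left homotopy from $1_E$ to $s p$ over $B$. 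Applying $U$ exhibits $1_{UE}$ and $U(s p)$ as homotopic through a genuine cylinder on the cofibrant object $UE$, so $U(s p)$ is a stable equivalence; together with $U p\circ U s=1_B$ and the two-out-of-six property of stable equivalences, this forces $U p$ to be a stable equivalence. This establishes acyclicity, whence the left-induced model structure exists.

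Finally, the two remaining assertions are inherited from $\SpS$. The structure is combinatorial because $\Mod_A(\SpS)$ is locally presentable and the model structure is cofibrantly generated, with cofibrations generated by $I_A$ and trivial cofibrations accessible. It is simplicial because $\Mod_A(\SpS)$ is tensored and cotensored over $\sset$ with these operations computed on underlying spectra by $U$; consequently the pushout-product maps in $\Mod_A(\SpS)$ have underlying maps the pushout-products in $\SpS$, so the simplicial pushout-product axiom for $\Mod_A(\SpS)$ follows from that of the simplicial model category $\SpS$ together with the fact that cofibrations and weak equivalences of $A$-modules are detected by $U$. The main obstacle throughout is the acyclicity verification of the third paragraph; everything else is a formal consequence of local presentability and of the good formal properties of the injective model structure.
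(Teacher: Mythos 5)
The paper offers no proof of this statement---it is quoted directly from \cite[5.0.1, 5.0.2]{left2}---but your argument is essentially the standard one: left-induce along the forgetful left adjoint and verify the acyclicity condition via the dual Quillen cylinder-object criterion (using that the injective structure is simplicial with all objects cofibrant), which is exactly the method the paper itself deploys, citing \cite[2.2.1]{left2}, to prove the analogous proposition for $A$-coalgebras. Your proof is correct and matches that approach; the only point you pass over lightly is that cofibrant generation of the left-induced (monomorphism) weak factorization system is itself a nontrivial consequence of local presentability, for which the paper invokes \cite[2.23]{left1} and \cite[3.3.4]{left2}.
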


Let $A$ be a commutative ring spectrum (i.e. a commutative $\bS$-algebra).
The symmetric monoidal category $(\Mod_A(\SpS), \sm_A, A)$ is presentable and the smash product $\sm_A$ preserves colimits in both variables. 
Thus we can apply \cite[2.7]{Porst} and we obtain the (forgetful-cofree)-adjunction between $A$-coalgebras and $A$-modules in $\SpS$:
\[
\begin{tikzcd}[column sep=large]
\coalg_A(\SpS) \ar[shift left=2]{r}{U} & \ar[shift left=2]{l}{\cofree}[swap]{\perp} \Mod_A(\SpS).
\end{tikzcd}
\]

\begin{prop} \label{prop: model cat for coalgebras in spectra }
Let $A$ be any commutative $\mathbb{S}$-algebra in symmetric spectra $\SpS$.
There exists a combinatorial model structure on $A$-coalgebras $\coalg_A(\SpS)$ left-induced by the (forgetful-cofree) adjunction from the injective stable model structure on $\Mod_A(\SpS)$. 
In particular, the weak equivalences in  $\coalg_A(\SpS)$ are the underlying stable equivalences, and the cofibrations are the underlying monomorphisms.
\end{prop}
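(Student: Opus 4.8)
The plan is to obtain the desired model structure as a \emph{left-induced} model structure along the forgetful functor $U$, by invoking the existence theorem for such structures from \cite{left2}. Recall that in the adjunction $U\dashv\cofree$ the forgetful functor $U$ is the \emph{left} adjoint, so that the cofibrations and weak equivalences in $\coalg_A(\SpS)$ should be precisely the maps that $U$ sends to cofibrations, respectively weak equivalences, in the injective stable model structure on $\Mod_A(\SpS)$. To set up the machinery I first record the hypotheses. By the preceding proposition, the injective stable model structure on $\Mod_A(\SpS)$ is combinatorial, its cofibrations are exactly the monomorphisms, and every object is cofibrant (the initial object is the zero module and $0\to M$ is always a monomorphism). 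Since $(\Mod_A(\SpS), \sm_A, A)$ is presentable with $\sm_A$ preserving colimits in each variable, the cited result \cite[2.7]{Porst} both produces the forgetful–cofree adjunction and shows that $\coalg_A(\SpS)$ is locally presentable, being comonadic over a locally presentable category along an accessible comonad.

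With these in hand, the existence of the left-induced structure reduces, by \cite{left2}, to two points. First, the two weak factorization systems (cofibration, trivial fibration) and (acyclic cofibration, fibration) must exist on $\coalg_A(\SpS)$. This follows from accessibility: both the class $U^{-1}(\text{monomorphisms})$ and the class $U^{-1}(\W)$ of $U$-reflected stable equivalences are accessible and accessibly embedded in the arrow category of the locally presentable category $\coalg_A(\SpS)$, so that Smith's theorem supplies the required factorizations and generating sets. Second, one must verify the \emph{acyclicity condition}: every map of $A$-coalgebras having the right lifting property against all left-induced cofibrations is carried by $U$ to a stable equivalence.

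The acyclicity condition is the genuine obstacle, and it is exactly here that the hypothesis that \emph{all} objects of the injective structure are cofibrant becomes essential. I would verify it following the strategy used for the module case in \cite{left2}. Since $U$ preserves colimits it creates the initial object, so every $A$-coalgebra is cofibrant in the putative structure (the map $0\to UC$ is a monomorphism, hence $\emptyset\to C$ is a left-induced cofibration); this removes any cofibrancy side-conditions when transporting lifting arguments from $\Mod_A(\SpS)$ to $\coalg_A(\SpS)$. One then constructs a functorial fibrant replacement in $\coalg_A(\SpS)$ lying over the injective-fibrant replacement and shows, using that the injective stable model structure on $\Mod_A(\SpS)$ has all objects cofibrant and admits the required lifting data, that any coalgebra map right orthogonal to every underlying monomorphism of coalgebras is already a stable equivalence on underlying spectra. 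This is precisely the acyclicity condition, and it is the step I expect to require the most care.

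Granting acyclicity, \cite{left2} yields the left-induced model structure on $\coalg_A(\SpS)$. It is combinatorial because it is left-induced from a combinatorial model structure on the locally presentable category $\coalg_A(\SpS)$, with the generating sets produced in the second step above. By construction its weak equivalences are exactly the coalgebra maps that are stable equivalences of underlying symmetric spectra, and its cofibrations are exactly the coalgebra maps that are monomorphisms of underlying $A$-modules, as claimed.
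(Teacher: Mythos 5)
Your setup is correct as far as it goes: you identify that the forgetful functor is the left adjoint, that $\coalg_A(\SpS)$ is locally presentable by \cite[2.7]{Porst}, that the factorizations come from accessibility, and that everything reduces to the acyclicity condition, with all objects cofibrant because all objects of the injective stable model structure on $\Mod_A(\SpS)$ are cofibrant. But the acyclicity condition is precisely the content of the proposition, and you do not prove it --- you explicitly defer it as ``the step I expect to require the most care'' and sketch a strategy (a functorial fibrant replacement lying over the injective-fibrant replacement) that is not the one that works here; that is the shape of the path-object argument for \emph{right}-induced structures, whereas the left-induced situation calls for its dual.

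The paper closes this gap by invoking \cite[2.2.1]{left2}: since every object is cofibrant in the putative left-induced structure, acyclicity follows once every $A$-coalgebra admits a \emph{good cylinder object} in $\coalg_A(\SpS)$. The nontrivial input is that tensoring with a simplicial set lifts to $A$-coalgebras: for a simplicial set $K$, the spectrum $\Sigma^\infty_+ K$ carries a canonical cocommutative $\bS$-coalgebra structure with comultiplication induced by the diagonal $K_+\r K_+\sm K_+$ and counit induced by $K_+\r S^0$ (\cite[Lemma 2.4]{perouxshipley}), so $K\o C:=\Sigma^\infty_+K\sm_{\bS}C$ is again an $A$-coalgebra. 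Applying this to the factorization $S^0\coprod S^0\rightarrowtail \Delta[1]_+\stackrel{\sim}{\rightarrow} S^0$ and using that colimits of coalgebras are computed in $\Mod_A(\SpS)$ and that the injective structure is simplicial yields the good cylinder $C\coprod C\rightarrowtail C\o I\stackrel{\sim}{\rightarrow} C$. Without this construction (or some substitute verification of acyclicity), your argument establishes only the two weak factorization systems, not the model structure.
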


\begin{proof}
We mimic the proof of \cite[Theorem 5.0.3]{left2}. We apply \cite[2.2.1]{left2}. Tensoring with a simplicial set lifts to $A$-coalgebras. Indeed, let $K$ be a simplicial set and $(C, \Delta_C, \varepsilon_C)$ be an $A$-coalgebra. Then the free $\bS$-module $\si_+K$ is endowed with a unique (cocommutative) $\bS$-coalgebra structure $(\si_+K, \Delta_K, \varepsilon_K)$, see \cite[Lemma 2.4]{perouxshipley}, where the comultiplication $\Delta_K$ is induced by the diagonal $K_+\r K_+\sm K_+$ and the counit $\varepsilon_K$ is induced by the non-trivial map $K_+\r S^0$.
Then the tensor $K\o C:=\Sigma^\infty_+ K\sm_\bS C$ is an $A$-coalgebra with comultiplication:
\begin{eqnarray*}
\si_+ K\sm_\bS C & \stackrel{\Delta_K\sm \Delta_C}\longrightarrow & (\si_+ K\sm_\bS \si_+ K) \sm_\bS (C\sm_A C) \\
& & \cong (\si_+K\sm_\bS C)\sm_A (\si_+ K\sm_\bS C),
\end{eqnarray*}
and counit:
\[
\begin{tikzcd}
\si_+K\sm_\bS C \ar{r}{\varepsilon_K\sm \varepsilon_C} & \bS \sm_\bS A\cong A.
\end{tikzcd}
\]
There is a good cylinder object in $\sset$ given by the factorization:
\[
\begin{tikzcd}
\displaystyle S^0\coprod S^0 \ar[rightarrowtail]{r} & \Delta[1]_+=I\ar{r}{\sim} & S^0.
\end{tikzcd}
\]
Since $\Mod_A(\SpS)$ is simplicial, all objects are cofibrant, and that the smash product of an $A$-coalgebra with this factorization in $\sset$ lifts to $\coalg_A(\SpS)$, this defines a good cylinder object in $\coalg_A(\SpS)$ for any $A$-coalgebra $C$:
\[
\begin{tikzcd}
\displaystyle C\coprod C \ar[rightarrowtail]{r} & C\otimes I \ar{r}{\sim} & C,
\end{tikzcd}
\]
as $C\otimes S^0\cong C$, and colimits in $\coalg_A(\SpS)$ are computed in $\Mod_A(\SpS)$. This is a combinatorial model structure by \cite[2.23]{left1} and \cite[3.3.4]{left2}. 
\end{proof}

In the proof above, we see that the tensor $K\o C$ remains a cocommutative $A$-coalgebra if $C$ is cocommutative, since the coalgebra $\si_+ K$ is always cocommutative. Thus,
we can extend the result to cocommutative $A$-coalgebras.

\begin{prop} \label{prop: model cat for COCOMMUTATIVE coalgebras in spectra}
Let $A$ be any commutative $\mathbb{S}$-algebra in symmetric spectra $\SpS$.
There exists a combinatorial model structure on the category of cocommutative $A$-coalgebras $\ccoalg_A(\SpS)$ left-induced by the (forgetful-cofree) adjunction from the injective stable model structure on $\Mod_A(\SpS)$. 
In particular, the weak equivalences in  $\ccoalg_A(\SpS)$ are the underlying stable equivalences, and the cofibrations are the underlying monomorphisms.
\end{prop}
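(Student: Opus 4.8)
The plan is to mirror the proof of Proposition \ref{prop: model cat for coalgebras in spectra } almost verbatim, replacing coalgebras by cocommutative coalgebras throughout and invoking the cocommutativity of $\si_+K$ at the single point where it matters. First I would record the relevant (forgetful--cofree) adjunction $U:\ccoalg_A(\SpS)\rightleftarrows \Mod_A(\SpS):\ccofree$. Since $(\Mod_A(\SpS),\sm_A,A)$ is presentable and $\sm_A$ preserves colimits in each variable, the cofree cocommutative coalgebra functor $\ccofree$ exists by \cite[2.7]{Porst}, exactly as in the associative case. I would then apply the left-induction criterion \cite[2.2.1]{left2} to the injective stable model structure on $\Mod_A(\SpS)$, which will produce a model structure on $\ccoalg_A(\SpS)$ whose weak equivalences are the underlying stable equivalences and whose cofibrations are the underlying monomorphisms, provided the acyclicity hypothesis of that criterion holds.

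Second, the one input to \cite[2.2.1]{left2} that must be checked is the existence of a good cylinder object in $\ccoalg_A(\SpS)$. As in the previous proof, I would obtain it by tensoring an arbitrary cocommutative $A$-coalgebra $C$ with the factorization $S^0\coprod S^0 \rightarrowtail \Delta[1]_+=I \xrightarrow{\sim} S^0$ in $\sset$. The only point to verify is that $K\o C=\si_+K\sm_\bS C$ is again cocommutative whenever $C$ is: this holds because $\si_+K$ carries the cocommutative $\bS$-coalgebra structure induced by the diagonal $K_+\r K_+\sm K_+$ (\cite[Lemma 2.4]{perouxshipley}), and the smash product of two cocommutative coalgebras is cocommutative, with comultiplication the composite displayed in the proof of Proposition \ref{prop: model cat for coalgebras in spectra }. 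Hence $C\coprod C \rightarrowtail C\o I \xrightarrow{\sim} C$ is a good cylinder object in $\ccoalg_A(\SpS)$, using again that all objects of $\Mod_A(\SpS)$ are cofibrant in the simplicial injective model structure and that colimits in $\ccoalg_A(\SpS)$ are computed in $\Mod_A(\SpS)$.

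Finally, having verified the cylinder condition, the left-induced model structure exists with the stated weak equivalences and cofibrations, and it is combinatorial by \cite[2.23]{left1} together with \cite[3.3.4]{left2}. The main obstacle is exactly the acyclicity/cylinder hypothesis of the left-induction theorem, which is the usual sticking point for left-induced model structures; but the remark preceding the statement already reduces it to the observation that $\si_+K$ is cocommutative, so no genuinely new argument is required beyond the associative case treated in Proposition \ref{prop: model cat for coalgebras in spectra }.
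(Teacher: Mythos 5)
Your proposal is correct and matches the paper's own argument: the paper proves this proposition precisely by observing (in the remark preceding the statement) that the cylinder object $K\o C=\si_+K\sm_\bS C$ from the proof of Proposition \ref{prop: model cat for coalgebras in spectra } remains cocommutative when $C$ is, since $\si_+K$ is always cocommutative, so the left-induction argument carries over verbatim. Your additional care in noting the existence of the cofree cocommutative coalgebra functor via \cite[2.7]{Porst} is a point the paper leaves implicit but is entirely consistent with its approach.
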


\subsection{The Failure of Rigidification}
We show here the failure of rigidification. Let $A$ and $B$ be commutative $\bS$-coalgebras. A map $A\rightarrow B$ is defined to be a \emph{positive flat cofibration of commutative $\bS$-algebras} if it is a cofibration in the model category of commutative $\bS$-algebras defined in \cite[3.2]{convenient} (or the positive flat stable model structure defined in \cite[III.6.1]{schwede-book}). Any $\mathbb{E}_\infty$-ring spectrum is equivalent to a positive flat cofibrant commutative $\bS$-algebra in $\SpS$.
As noted in \cite[2.4]{perouxshipley}, every comonoid in $(\mathsf{sSet}_*, \Smash, S^0)$ is of the form $Y_+$ and the comultiplication is given by the diagonal $Y_+\rightarrow (Y\times Y)_+\cong Y_+ \Smash Y_+$.

\begin{thm}[{\cite[3.4, 3.6]{perouxshipley}}]\label{thm: perouxshipley result}
Let $A$ be a positive flat cofibrant commutative $\mathbb{S}$-algebra in $\SpS$. Then, given any counital coassociative $A$-coalgebra $C$ in $\SpS$, the comultiplication is cocommutative and induced by the following epimorphism of $A$-coalgebras:
\[
A\Smash C_0 \longrightarrow C,
\]
where $A \Smash C_0$ is given an $A$-coalgebra structure via the diagonal on the pointed space $C_0\rightarrow C_0\Smash C_0$.
\end{thm}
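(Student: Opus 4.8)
The plan is to reduce everything to the zeroth space $C_0$ and to exploit the rigidity of comonoids in pointed simplicial sets. First I would extract the consequence of positivity. Since $A$ is a positive flat cofibrant commutative $\bS$-algebra, its cell presentation is obtained from $\bS$ by pushouts and transfinite compositions of free maps concentrated in levels $\geq 1$; all such maps are isomorphisms at level $0$, and $\bS_0=S^0$, so $A_0=S^0$. As the level-$0$ part of a smash product of symmetric spectra is the smash product of the level-$0$ spaces, this gives $(C\sm_A C)_0\cong C_0\Smash_{A_0}C_0=C_0\Smash C_0$. Hence $\Delta_C$ and $\varepsilon_C$ restrict at level $0$ to a counital coassociative comonoid $(C_0,\Delta_0,\varepsilon_0)$ in $(\sset_*,\Smash,S^0)$. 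By the classification of such comonoids recalled just before the statement, $C_0\cong Y_+$ for $Y=\varepsilon_0^{-1}(\text{non-basepoint})$, with $\Delta_0$ the diagonal $C_0\r C_0\Smash C_0$; in particular $\Delta_0$ is cocommutative.

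Next I would build the comparison map. Let $\phi\co A\Smash C_0\r C$ be the $A$-linear extension of the adjunction counit $\si C_0=\bS\Smash C_0\r C$, adjoint to $\id_{C_0}$ under the adjunction between the free symmetric spectrum functor $F_0$ and evaluation at level $0$; concretely $\phi_n$ is the module structure map $A_n\Smash C_0\r C_n$. Equip $A\Smash C_0$ with the $A$-coalgebra structure $\id_A\Smash\Delta_0$ and counit $\id_A\Smash\varepsilon_0$. I would check $\phi$ is a map of $A$-coalgebras: both $\Delta_C\circ\phi$ and $(\phi\sm_A\phi)\circ\Delta_{A\Smash C_0}$ are $A$-linear maps out of the free $A$-module $A\Smash C_0$ on $\si C_0$, and they agree after restriction along $\si C_0\r A\Smash C_0$, where both equal $\Delta_0$; hence they coincide. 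This is precisely the assertion that $\Delta_C$ is induced by $\phi$ from the diagonal, and compatibility with counits is immediate.

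The crux is that $\phi$ is an epimorphism, i.e. each $\phi_n\co A_n\Smash C_0\r C_n$ is surjective, which I would prove by induction on $n$; the case $n=0$ is the identity. For the step, take a non-basepoint simplex $x\in C_n$ and choose a representative $\gamma\cdot(x_p\Smash x_q)$ of $\Delta_C(x)$ in a single wedge summand of $(C\sm_A C)_n$, with $\gamma\in\Sigma_n$ and $p+q=n$. If $q\geq 1$, the right counit law $(\id\sm_A\varepsilon_C)\Delta_C=\id$ gives $x=\gamma\cdot\big(x_p\cdot\varepsilon(x_q)\big)$ with $\varepsilon(x_q)\in A_q$; since $p<n$ the inductive hypothesis rewrites $x_p$ through $C_0$, and associativity of the action, commutativity of $A$, and equivariance of the structure maps collect all scalars into a single element of $A_n$ acting on a simplex of $C_0$, so $x\in\im\phi_n$. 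If $q=0$, the left counit law $(\varepsilon_C\sm_A\id)\Delta_C=\id$ gives $x=\gamma\cdot\big(\varepsilon(x_p)\cdot x_q\big)$ with $x_q\in C_0$ and $\varepsilon(x_p)\in A_n$, again in the image (non-basepointedness of $x$ prevents $\varepsilon_0$ from collapsing the level-$0$ factor). Thus $\phi_n$ is onto.

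Finally, cocommutativity of $\Delta_C$ is formal once $\phi$ is a coalgebra epimorphism with cocommutative source: writing $\tau$ for the symmetry of $\sm_A$, one has $\tau\Delta_{A\Smash C_0}=\Delta_{A\Smash C_0}$, whence $\tau\Delta_C\phi=(\phi\sm_A\phi)\tau\Delta_{A\Smash C_0}=(\phi\sm_A\phi)\Delta_{A\Smash C_0}=\Delta_C\phi$, and $\tau\Delta_C=\Delta_C$ follows because $\phi$ is epi. The main obstacle is the surjectivity step: one must handle the $\Sigma_n$-equivariant summand decomposition of the smash product over $A$, cope with the fact that a simplex has no canonical bidegree (the $A$-coequalizer relation shifts degree between the two tensor factors), and combine both counit laws with $A$-associativity to genuinely lower the level. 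The remaining ingredients—the identity $A_0=S^0$, the classification of comonoids in pointed spaces, and the formal coalgebra manipulations—are comparatively routine.
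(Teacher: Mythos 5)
The paper does not prove this statement itself; it imports it verbatim from \cite[3.4, 3.6]{perouxshipley}, so there is no internal proof to compare against. Your reconstruction is correct and follows essentially the same route as that reference: deduce $A_0=S^0$ from positive flat cofibrancy, identify $(C\sm_A C)_0\cong C_0\sm C_0$ so that the classification of comonoids in $(\sset_*,\sm,S^0)$ forces $C_0=Y_+$ with the diagonal, and then prove levelwise surjectivity of the action map $A_n\sm C_0\to C_n$ by induction on $n$ using a single-summand representative of $\Delta_C(x)$ together with the two counit identities, with cocommutativity following formally from the epimorphism.
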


Let $A$ be any commutative $\bS$-algebra. Let $\coalg_A(\SpS_c)$ denote the comonoid in the cofibrant objects of $A$-modules in $\SpS$ endowed with the absolute projective stable model structure (as in \cite[IV.6.1]{schwede-book}).  There is a natural map of $\infty$-categories:
\[
\begin{tikzcd}
 \alpha:\N(\mathsf{CoAlg}_A(\SpS_c)) \left[ \W^{-1}  \right] \ar{r} & \mathcal{CoAlg}_{\mathbb{A}_\infty}(\mathcal{Mod}_A(\mathcal{Sp})),
\end{tikzcd}
\]
where $\W$ is the class of stable equivalences between $A$-coalgebras. 

\begin{cor}\label{cor: non rigi  for coassociative}
Let $A$ be a positive flat cofibrant commutative $\bS$-algebra in $\SpS$. Then the $\infty$-category of $A$-modules $\mathcal{Mod}_A(\mathcal{Sp})$ does not satisfy the coassociative rigidification.
In parti\-cular, for $A=\bS$ we have:
\[
\begin{tikzcd}
\N(\coalg_\bS(\SpS_c)) \left[ \W^{-1}  \right] \ar{r}{\not\simeq} & \coalginf_{\mathbb{A}_\infty} (\mathcal{Sp}) 
\end{tikzcd}
\]
\end{cor}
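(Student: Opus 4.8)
The plan is to show that $\alpha$ fails to be essentially surjective. The crucial input is the rigidity statement of Theorem~\ref{thm: perouxshipley result}: for $A=\bS$ it asserts that \emph{every} counital coassociative $\bS$-coalgebra in $\SpS$ is automatically cocommutative. Consequently the category $\coalg_\bS(\SpS_c)$ coincides with its cocommutative subcategory $\ccoalg_\bS(\SpS_c)$, and, keeping track of the compatibility between the comparison functors $\alpha$ and $\beta$ built in Section~\ref{sec: rigidification setting}, the functor $\alpha$ factors as
\[
\N(\coalg_\bS(\SpS_c))\left[\W^{-1}\right] \xrightarrow{\ \beta\ } \coalginf_\ei(\mathcal{Sp}) \xrightarrow{\ \iota\ } \coalginf_\ai(\mathcal{Sp}),
\]
where $\iota$ forgets the $\ei$-structure down to the underlying $\ai$-structure. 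Thus the essential image of $\alpha$ is contained in that of $\iota$: every $\ai$-coalgebra in the image of $\alpha$ admits an $\ei$-refinement.

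First I would therefore reduce the statement to producing a single $\ai$-coalgebra in $\mathcal{Sp}$ that admits \emph{no} $\ei$-refinement, that is, an object of $\coalginf_\ai(\mathcal{Sp})$ lying outside the essential image of $\iota$. The existence of such an object immediately forces $\iota$, and hence $\alpha$, to fail essential surjectivity, so $\alpha$ cannot be an equivalence of $\infty$-categories.

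To construct the witness I would dualize a non-commutative associative ring spectrum carried by a dualizable object. Let $G$ be a finite non-abelian group and consider the spherical group ring $\si_+ G$. As a finite free $\bS$-module it is dualizable, and the group multiplication makes it an $\ai$-ring spectrum whose ring of components $\pi_0(\si_+ G)=\mathbb{Z}[G]$ is non-commutative; in particular $\si_+ G$ is not homotopy commutative and admits no $\ei$-refinement. Applying Spanier--Whitehead duality---a contravariant symmetric monoidal self-equivalence of the full subcategory of dualizable objects of $\mathcal{Sp}$---transports this $\ai$-algebra structure to an $\ai$-coalgebra structure on $(\si_+ G)^\vee$. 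Were this coalgebra to admit an $\ei$-refinement, dualizing back would endow $\si_+ G$ with an $\ei$-algebra structure, contradicting the non-commutativity of $\pi_0$. Hence $(\si_+ G)^\vee$ is the desired object. The same recipe handles arbitrary positive flat cofibrant $A$: replace $\si_+ G$ by the $A$-group ring $A\sm \si_+ G$ (or by the endomorphism ring $\mathrm{End}_A(A^{\oplus n})$ for $n\geq 2$), whose dual is a non-cocommutative $\ai$-coalgebra in $\coalginf_\ai(\mathcal{Mod}_A(\mathcal{Sp}))$ with no $\ei$-refinement.

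The main obstacle is not the witness but the bookkeeping justifying the factorization $\alpha=\iota\circ\beta$: one must verify that the strict cocommutativity furnished by Theorem~\ref{thm: perouxshipley result} is promoted, under the Dwyer--Kan localization, to a genuine $\ei$-coalgebra structure in $\coalginf_\ei(\mathcal{Sp})$ compatible with $\alpha$, and not merely to an $\ai$-coalgebra that happens to be cocommutative on homotopy groups. This is exactly the role of the cocommutative comparison functor $\beta$ of Definition~\ref{defi: coassoc and cocom rigidification}, together with the naturality square relating $\beta$ to $\alpha$ along the forgetful functors $\ccomon\to\comon$ and $\iota$; once that square is in place the argument is complete. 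A secondary verification is that Spanier--Whitehead duality indeed carries the $\ai$-operad action to the corresponding co-operadic structure, so that $(\si_+ G)^\vee$ really lands in $\coalginf_\ai(\mathcal{Sp})$; this follows from the symmetric monoidality of the duality functor.
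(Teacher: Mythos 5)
Your proposal is correct and follows essentially the same route as the paper: invoke Theorem \ref{thm: perouxshipley result} to see that everything in the image of $\alpha$ is cocommutative (hence admits an $\mathbb{E}_\infty$-refinement), then exhibit an $\mathbb{A}_\infty$-coalgebra with no such refinement by Spanier--Whitehead dualizing the group ring of a non-abelian group (the paper uses the compact group $O(2)$ where you use a finite non-abelian group, but the mechanism is identical). Your extra care about the factorization $\alpha=\iota\circ\beta$ is a reasonable refinement of a step the paper states more briefly.
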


\begin{proof}
Let $(C, \Delta, \varepsilon)$ be an $A$-coalgebra in $\SpS$ that is cofibrant as an $A$-module in the (absolute) projective stable model structure. 
Suppose the functor:
\[
\begin{tikzcd}
 \alpha:\N(\mathsf{CoAlg}_A(\SpS_c)) \left[ \W^{-1}  \right] \ar{r} & \mathcal{CoAlg}_{\mathbb{A}_\infty}(\mathcal{Mod}_A(\mathcal{Sp})),
\end{tikzcd}
\]
is an equivalence of $\infty$-categories.
By Theorem \ref{thm: perouxshipley result}, we see that $\alpha(C)$ is automatically an $\ei$-coalgebra.
But there exist $\ai$-coalgebras in $\mathcal{Sp}$ that are not $\ei$-coalgebras. 
Indeed, take any compact topological group that is not Abelian (say $O(2)$), then $A\wedge O(2)_+$ is an $\ai$-algebra in $\mathcal{Mod}_A(\mathcal{Sp})$ that is not commutative and is a compact spectrum. By Spanier-Whitehead duality, we obtain an $\ai$-coalgebra that is not $\ei$ in spectra.
\end{proof}

\begin{rem}
Let $A$ be a positive flat cofibrant commutative $\bS$-algebra in $\SpS$. Let $G$ be a compact Abelian Lie group (say a torus). Then the Spanier-Whitehead dual of the commutative ring spectrum $A\wedge G_+$ is an $\ei$-coalgebra in spectra with comultiplication that is not the diagonal.
Therefore the $\infty$-category of $A$-modules $\mathcal{Mod}_A(\mathcal{Sp})$ does not satisfy the cocommutative rigidification.
In parti\-cular, for $A=\bS$ we have:
\[
\begin{tikzcd}
\N(\ccoalg_\bS(\SpS_c)) \left[ \W^{-1}  \right] \ar{r}{\not \simeq} & \coalginf_{\ei} (\mathcal{Sp}).
\end{tikzcd}
\]
\end{rem}

\renewcommand{\bibname}{References}
\bibliographystyle{amsalpha}
\bibliography{biblio}
\end{document}